\documentclass[10pt]{amsart}
\usepackage[margin=1.2in,marginparsep=0.1in,marginparwidth=1in]{geometry}
\usepackage{amssymb,amsmath,amsthm,amstext,amscd,latexsym,graphics,graphicx,bbm,caption}
\usepackage[dvipsnames,svgnames,table]{xcolor}
\usepackage[plainpages=false,colorlinks=true]{hyperref}
\usepackage{tikz}
\usepackage{tikz-cd}
\usepackage{booktabs}
\usetikzlibrary{positioning}
\usepackage{mathtools,cleveref}
\usepackage{comment}
\tikzset{>=stealth}
\hypersetup{citecolor=Sepia,linkcolor=blue, urlcolor=blue}

\usepackage{cite} 

\makeatletter
\def\@tocline#1#2#3#4#5#6#7{\relax
  \ifnum #1>\c@tocdepth 
  \else
    \par \addpenalty\@secpenalty\addvspace{#2}%
    \begingroup \hyphenpenalty\@M
    \@ifempty{#4}{%
      \@tempdima\csname r@tocindent\number#1\endcsname\relax
    }{%
      \@tempdima#4\relax
    }%
    \parindent\z@ \leftskip#3\relax \advance\leftskip\@tempdima\relax
    \rightskip\@pnumwidth plus4em \parfillskip-\@pnumwidth
    #5\leavevmode\hskip-\@tempdima
      \ifcase #1
       \or\or \hskip 2em \or \hskip 2em \else \hskip 3em \fi%
      #6\nobreak\relax
    \dotfill\hbox to\@pnumwidth{\@tocpagenum{#7}}\par
    \nobreak
    \endgroup
  \fi}
\makeatother
\definecolor{deepgreen}{RGB}{0,100,0}
\newtheorem{intro-thm}{Theorem}[]
\theoremstyle{plain}
\newtheorem{thm}{Theorem}[section]
\newtheorem{theorem}[thm]{Theorem}
\newtheorem{question}[thm]{Question}
\newtheorem{lemma}[thm]{Lemma}

\theoremstyle{definition}

\newtheorem{definition}[thm]{Definition}

\newcommand{\GL}{{\mathrm {GL}}}

\newcommand{\sA}{{\mathcal A}}
\newcommand{\sB}{{\mathcal B}}
\newcommand{\sC}{{\mathcal C}}
\newcommand{\sD}{{\mathcal D}}

\newcommand{\sF}{{\mathcal F}}

\newcommand{\sH}{{\mathcal H}}
\newcommand{\sI}{{\mathcal I}}

\newcommand{\sK}{{\mathcal K}}

\newcommand{\sM}{{\mathcal M}}

\newcommand{\sT}{{\mathcal T}}

\newcommand{\C}{{\mathbb C}}
\newcommand{\D}{{\mathbb D}}

\newcommand{\R}{{\mathbb R}}
\newcommand{\T}{{\mathbb T}}

\newcommand{\Z}{{\mathbb Z}}

\newcommand\restr[2]{\ensuremath{\left.#1\right|_{#2}}}

\input{xy}
\xyoption{all}

\DeclareMathOperator{\ind}{ind}
\DeclareMathOperator{\ran}{ran}

\begin{document}

\author{Sourav Ghosh}
\address{Department of Mathematics, Indian Institute of Science Education and Research Pune, Maharashtra 411008}
\email{sourav.ghosh@students.iiserpune.ac.in}
\author{Haripada Sau}
\address{Department of Mathematics, Indian Institute of Science Education and Research Pune, Maharashtra 411008}
\email{hsau@iiserpune.ac.in}
\thanks{The author Sau was supported by the Anusandhan National Research Foundation (ANRF), Government of India, under the Mathematical Research Impact-Centric Support (MATRICS) scheme (Grant No. ANRF/ARGM/2025/000573/MTR)}

\thanks{}

\title[Unitary invariant]{A unitary invariant for $C^*$-algebras generated by isometries with a twisted commutation relation}

\vspace{2mm}

\subjclass[2020]{46L05, 47A53, 47A13.}

\keywords{Twisted commutation relations, $q$-commuting isometries, representation, index theory, spatial isomorphism, unitary invariant.}
    
\begin{abstract}
We investigate the classification of $C^*$-algebras generated by pairs of isometries $(V_1, V_2)$ satisfying $V_1V_2=qV_2V_1$, where $q=e^{2\pi i\theta}$ with $\theta$ irrational, up to unitary equivalence. Using a functional model that realizes pure $q$-commuting pairs on vector-valued Hardy spaces in terms of a uniquely associated triple $(\mathcal{F}, P, U)$ consisting of Hilbert space $\mathcal{F}$, an orthogonal projection $P$ and a unitary operator $U$, we analyze the internal structure of $C^*(V_1,V_2)$ via an extension of the $C^*$-algebra generated by its minimal $q$-commuting unitary extension by the defect ideal $\mathcal{I}_{\mathcal{D}} =\langle [V_1^*, V_1],[V_2^*, V_2]\rangle$. For pairs that are essentially doubly $q$-commuting, we show that $PUP|_{\operatorname{ran} P}$ is semi-Fredholm and prove that $|\operatorname{ind} PUP|_{\operatorname{ran} P}|$ is invariant under unitary equivalence of the corresponding $C^*$-algebras, thereby providing an explicit criterion for distinguishing non-equivalent pairs.

This work extends the seminal 1978 results of Berger, Coburn, and Lebow for commuting isometries, while addressing the key technical nuances of this noncommutative setting: most notably, navigating the failure of Gelfand theory due to noncommutativity and replacing the classical Toeplitz extension by an extension of the noncommutative torus by the ideal of compact operators.

\end{abstract}

\maketitle

\section{Introduction}
We study unitary equivalence of $C^*$-algebras generated by pairs of isometries on a Hilbert space satisfying a twisted commutation relation. Fix $q=e^{2\pi i\theta}$ for some $\theta\in \R$, and let $(V_1,V_2)$ be a pair of isometries on a Hilbert space $\sH$. We call $(V_1,V_2)$ \emph{$q$-commuting} if
\begin{equation}\label{E:qComm}
V_1V_2=qV_2V_1
\end{equation}holds. A prototypical example of a $q$-commuting isometric pair is $(R_q,T_z)$ on the classical Hardy space $H^2$, where $R_q$ is the rotation operator and $T_z$ is the shift operator, defined respectively by
$$
R_q:f(z)\mapsto f(qz) \quad\mbox{and}\quad
T_z:f(z)\mapsto zf(z).
$$ By the functional model theory for $q$-commuting isometries developed in \cite{MR5075249} (briefly discussed in \Cref{S:q-model}), the operators $(R_q,T_z)$ acting on Hardy spaces (possibly with non-trivial coefficient spaces) serve as building blocks for $q$-commuting isometries. The universal $C^*$-algebra generated by $q$-commuting isometric symbols has been studied extensively; see \cite{MR2132080} for classification up to isometric $*$-isomorphism and \cite{MR3022732} for representations, nuclearity and $K$-theory. The concrete algebra $C^*(R_q,T_z)$ was studied by Park \cite{MR2176158} via an extension of the irrational rotation algebra; in particular, an index invariant is defined using differential geometric techniques. Jury later generalized this framework in \cite{MR2320853} to the $C^*$-algebra generated by $T_z$ and the composition operator $C_{\varphi}:H^2\to H^2$ defined by $C_{\varphi}(f)=f\circ \varphi$, for a linear fractional self map $\varphi$ of the unit disk $\mathbb D$.

Our focus here is unitary equivalence of such concrete $C^*$-algebras. Let $\sA_1\subset\sB(\sH_1)$ and $\sA_2\subset\sB(\sH_2)$ be $C^*$-algebras represented on Hilbert spaces $\sH_1$ and $\sH_2$, respectively. We say that $\sA_1$ and $\sA_2$ are \emph{unitarily equivalent} if there exists a unitary operator $\tau:\sH_1\to \sH_2$ such that
$$
\sA_2=\tau \sA_1\tau^*\coloneqq \{\tau T\tau^*: T\in \sA_1\}.
$$
In this case $\sA_1$ and $\sA_2$ are isometrically $*$-isomorphic via the conjugation map $\Phi:\sA_1\to \sA_2$, defined by $\Phi(T)=\tau T\tau^*$.

Motivated by classification problems, we ask when two $q$-commuting isometric pairs generate unitarily equivalent $C^*$-algebras.
\begin{question}\label{Q:TheQuestion}
Given two $q$-commuting isometric pairs $(V_1,V_2)$ and $(V_1',V_2')$, when are the associated $C^*$-algebras $C^*(V_1,V_2)$ and $C^*(V_1',V_2')$ unitarily equivalent?
\end{question}
We now summarize our main results. Our first main result (\Cref{T:kappa_Cstar_invariant}) provides a necessary condition for unitary equivalence under the hypotheses that $\theta$ is irrational and $(V_1,V_2)$ is \emph{essentially doubly $q$-commuting}, meaning that   $V_2V_1^*-qV_1^*V_2$ is compact.

We say that an isometric pair $(V_1,V_2)$ is \emph{pure} if
\begin{align}\label{pure}
\bigcap_{n=0}^{\infty}(V_1V_2)^n\sH=\{0\}.
\end{align}Our main tool is a functional model developed in \cite{MR5075249}, which realizes a $q$-commuting \emph{pure} isometric pair $(V_1,V_2)$ as operators on a vector-valued Hardy space $H^2(\sF)=H^2\otimes\sF$ in the form
\begin{align}\label{BS-model}
\Bigl(
T_z R_q \otimes UP + R_q\otimes UP^\perp,
\quad
R_{\overline{q}}T_z \otimes P^\perp U^* + R_{\overline{q}}\otimes PU^*
\Bigr).
\end{align} 
where the triple $(\sF,P,U)$, herein called a \textit{BCL triple}, consists of a Hilbert space $\sF$, an orthogonal  projection $P$ and a unitary operator $U$, all uniquely associated with $(V_1,V_2)$ -- see Theorem \ref{T:q-BCL} for a more precise statement. The model \eqref{BS-model} extends the classical work of Berger, Coburn and Lebow \cite{MR467392}, which is the case $q=1$. In the same paper they provide a necessary condition for unitary equivalence (see Question \ref{Q:TheQuestion}) in the commutative setting which also motivated the present investigation. 

The functional model \eqref{BS-model} yields a concrete description of $C^*(V_1,V_2)$ and clarifies its relationship with the $C^*$-algebra generated by the minimal $q$-commuting unitary extension of $(V_1,V_2)$. In particular, we show in \Cref{T:Represent} that every $X\in C^*(V_1,V_2)$ can be written as
\begin{align}\label{Represent}
X=T_A+D,
\end{align}
where $A$ belongs to the $C^*$-algebra of the minimal $q$-commuting unitary extension and $D$ lies in the \emph{defect ideal}
\begin{align}\label{Defect}
\sI_{\sD}\coloneqq \left\langle [V_1^*,V_1],[V_2^*,V_2]\right\rangle.
\end{align}

Let $(V_1,V_2)$ be a $q$-commuting isometric pair with associated \emph{BCL triple} $(\sF,P,U)$. When the Calkin algebra $C^*(P,U,\sK)/\sK$ is commutative, we obtain an index theory for Fredholm operators of the form $I+D$ with $D\in\sI_{\sD}$. We apply this to the distinguished element of the defect ideal,
\begin{align*}
D(V_1,V_2)\coloneqq [V_2^*,V_2](V_1-I)[V_2^*,V_2].
\end{align*}
These ingredients lead to a unitary invariant in a more general setting. When $(V_1,V_2)$ is essentially doubly $q$-commuting, we show that $PUP\vert_{\mathrm{ran} P}$ is semi-Fredholm and 
\begin{equation}\label{E:kappa}
\kappa(V_1,V_2)\coloneqq \ind \bigl(PUP\vert_{\mathrm{ran}P}\bigr)\in \Z\cup\{+\infty\}.
\end{equation}
We prove that 
$|\kappa(V_1,V_2)|$ is a unitary invariant, i.e., if $C^*(V_1,V_2)$ and $ C^*(V_1',V_2')$ are unitarily equivalent, then $
|\kappa(V_1,V_2)|=|\kappa(V_1',V_2')|.
$

Although our approach follows \cite{MR467392}, the twisted setting introduces additional technical hurdles. The main obstruction is that, unlike the case $q=1$, the $C^*$-algebra $C^*(\widetilde{V}_1,\widetilde{V}_2)$ generated by a $q$-commuting unitary extension $(\widetilde{V}_1,\widetilde{V}_2)$ of $(V_1,V_2)$ is noncommutative. Consequently, the commutator ideal
\[
\mathcal{C}=\overline{\mathrm{span}}\{AB-BA: A,B\in C^*(V_1,V_2)\}
\]
no longer plays the role it does in the commuting case. We show instead that the defect ideal $\mathcal{I}_D$ in \eqref{Defect} is the appropriate replacement that leads to the decomposition \eqref{Represent}.
The second difficulty is that noncommutativity makes Gelfand theory unavailable. When $C^*(\widetilde{V}_1,\widetilde{V}_2)$ is commutative, Gelfand theory provides a powerful framework for studying the Fredholm index of operators of the form $T_A+D$ with $A\in C^*(\widetilde{V}_1,\widetilde{V}_2)$. In the twisted case this approach breaks down. Nevertheless, in \Cref{S:Unitary invariant} we show that it suffices to analyze the special case $A=I$ in order to construct the unitary invariant \eqref{E:kappa}. We rely on the rich structure of noncommutative torus to mitigate the absence of Gelfand theory.

\section{Preliminaries}
In this section, we fix notation and briefly recall the Fredholm theory and operator models used throughout the paper.

\subsection{Notation}
We denote by $\sB(\sH)$ and $\sK(\sH)$ the $C^*$-algebra of bounded operators and the ideal of compact operators on a Hilbert space $\sH$, respectively. When it is clear from the context we use the convention $\sK(\sH)=\sK$.

Throughout, $\D\subset\C$ denotes the open unit disk and $\T$ the unit circle. We write $H^2$ for the Hardy space on $\D$, realized as a closed subspace of $L^2(\T)$ via non-tangential boundary values. Let $(e_n)_{n\geq 0}$ be the standard orthonormal basis of $H^2$. For a Hilbert space $\sF$, we identify
$
H^2(\sF)\cong H^2\otimes \sF,
$
which we view as the space of $\sF$-valued analytic functions on $\D$ with square-summable Taylor coefficients about the origin. We denote by $P_{H^2}$ the orthogonal projection of $L^2(\T)$ onto $H^2$.

We use the standard notation $M_z\in\sB(L^2(\T))$ for the bilateral shift: $(M_z f)(z)=zf(z)$. With a slight abuse of notation, we denote $R_q$ for the rotation operator $R_q: f\mapsto f(qz)$ for $f$ in $L^2(\T)$ as well. For an orthogonal projection $P$ we write $P^\perp\coloneqq I-P$. For $n\geq 0$, $\widetilde{P_n}$ denote the rank-one projection onto $\textrm{span} \{e_n\}$.

Given a collection $\mathcal S$ of operators on $\sH$, we use $C^*(\mathcal S)$ and
$
\langle \mathcal S \rangle
$ to denote, respectively, the $C^*$-algebra and the ideal (closed two-sided $*$-ideal) generated by $\mathcal S$. We also write $[A,B]$ for the commutator $AB-BA$ of two operators $A$ and $B$.

With these notations in place, we now briefly recall two themes in operator theory whose interaction constitutes the principal subject of this paper.

\subsection{Model of \texorpdfstring{$q$}{q}-commuting isometries}\label{S:q-model}
The functional model theory for commuting pairs of isometries traces its foundations to the seminal works \cite{MR467392, MR2931928, MR2289758, MR2091468, MR229093}; see also \cite{MR4978050} and \cite[Chapter 3]{MR5090861} for a comprehensive presentation and further developments. Extending the commuting framework, the work \cite{MR5075249} establishes a parallel functional model for isometric pairs satisfying a twisted commutation relation. since this result plays a central role in our investigation, we record it here for ease of reference.
\begin{theorem}[See Theorems 3.6 and 3.10 in \cite{MR5075249}] \label{T:q-BCL}
If $(V_1,V_2)$ is a pure pair (see \eqref{pure} for the definition) of $q$-commuting isometries on $\sH$, then there exist a Hilbert space $\sF$, a projection $P\in\sB(\sF)$, and a unitary $U\in\sB(\sF)$ such that $(V_1,V_2)$ is unitarily equivalent to
\begin{align}\label{isomod1}
\Bigl(
T_z R_q \otimes UP + R_q\otimes UP^\perp,
\quad
R_{\overline{q}}T_z \otimes P^\perp U^* + R_{\overline{q}}\otimes PU^*
\Bigr)
\quad \text{on} \quad H^2\otimes \sF.
\end{align}

The triple $(\sF,P,U)$ is uniquely determined up to unitary equivalence, i.e., if $(V_1,V_2)$ and $(V_1',V_2')$ are pairs of $q$-commuting isometries on $\sH$ and $\sH'$, with associated triple $(\sF,P,U)$ and $(\sF',P',U')$, then $(V_1,V_2)$ and $(V_1',V_2')$ are jointly unitarily equivalent if and only if there exists a unitary  $\tau:\sF\to \sF'$ such that $\tau(P,U)=(P',U')\tau$.
\end{theorem}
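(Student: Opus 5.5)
Since the argument runs in parallel with the commuting case of Berger, Coburn and Lebow, I would build the model by extracting a single wandering subspace, using the commutation relation only to track the scalar twists. Put $V=V_1V_2$. This is an isometry, and purity says exactly that $V$ is a pure isometry, i.e.\ a unilateral shift. Take $\sF=\ker V^*=\sH\ominus V\sH$. The Wold decomposition then gives a unitary $W:\sH\to H^2\otimes\sF$ with $V^n f\mapsto z^n\otimes f$ for $f\in\sF$, and $W$ intertwines $V$ with $T_z\otimes I$.

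The first step is to see how $V_1$ and $V_2$ act relative to $V$. From $V_1V_2=qV_2V_1$ we get $V_1V=V_1V_1V_2=qV_1V_2V_1\cdot \bar q\, q\ldots$; it is cleaner to compute directly that $V_1 V = q V V_1$ and $V_2V=\bar q VV_2$. Hence $V_1V^n=q^nV^nV_1$ and $V_2V^n=\bar q^nV^nV_2$.

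Next I would use that $V_1$ and $V_2$ are isometries to decompose $\sF$. Since $\sF=\ker V^*$ and $V=V_1V_2=qV_2V_1$, we have $\sF=(\sH\ominus V_1\sH)\oplus V_1(\sH\ominus V_2\sH)$, and also the analogous splitting with the roles of $V_1$ and $V_2$ swapped. Let $P$ be the projection of $\sF$ onto $V_1\ker V_2^*$, so that $P^\perp$ projects onto $\ker V_1^*$. Define $U$ on $\sF$ by
\[
U:\; V_1 \ker V_2^* \oplus \ker V_1^* \;\longrightarrow\; \ker V_2^*\oplus V_2\ker V_1^* ,
\]
sending $V_1 x\mapsto x$ and $y\mapsto V_2 y$, possibly with scalar factors $q^{\pm1}$. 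The target is again $\sF$ by the second splitting. It remains to check that $V_1$ maps $V^n f$ to either $q^{n}V^n(\cdot)$ or $q^nV^{n+1}(\cdot)$, according to which summand $f$ lies in. This reproduces the expression $T_zR_q\otimes UP+R_q\otimes UP^\perp$ after relabelling; the exact placement of $U$ and $P$ is forced by matching with the formula \eqref{isomod1}. The formula for $V_2$ then follows by the same computation, or by using $V_2=V_1^*V$.

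For uniqueness, the key point is that $\sF$, $P$ and $U$ are all defined intrinsically from $(V_1,V_2)$. A joint unitary equivalence restricts to a unitary $\tau:\sF\to\sF'$ intertwining $P$ and $U$. Conversely, given such a $\tau$, the operator $I\otimes\tau$ intertwines the two models.

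I expect the main obstacle to be the phase bookkeeping. Ensuring that the powers of $q$ come out exactly as $R_q$ and $R_{\bar q}$, on the correct side of $T_z$, and that $U$ is unitary (not merely unitary up to a phase), requires choosing the scalar factors in the definition of $U$ carefully.
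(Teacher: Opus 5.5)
The paper imports this theorem from Ball--Sau without proof. The data it records afterwards show that your route is the intended one: $\tau_{\rm BCL}$ is exactly your Wold map $V^nf\mapsto z^n\otimes f$ followed by $f\mapsto\bigl((I-V_1V_1^*)f,\,V_1^*f\bigr)$, i.e.\ the splitting $\sF=\ker V_1^*\oplus V_1\ker V_2^*$, and your $(P,U)$ is the canonical triple recorded there.

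The gap is in the step you defer. It is not true that $V_1$ sends $V^nf$ to $q^nV^n(\cdot)$ or to $q^nV^{n+1}(\cdot)$ according to which of $\ker V_1^*$, $V_1\ker V_2^*$ contains $f$. For $a\in\ker V_1^*$ one has $V_1a=V_1(I-V_2V_2^*)a+V\,V_2^*a$. Here $V_1(I-V_2V_2^*)a\in\sF$, and $V_2^*a\in\ker V_1^*\subseteq\sF$ because $V_1^*V_2^*=qV_2^*V_1^*$. Both pieces are nonzero in general, and the same mixing occurs on $V_1\ker V_2^*$.

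The dichotomy holds for the \emph{other} splitting $\sF=\ker V_2^*\oplus V_2\ker V_1^*$. Since $V^*V_1f=V_2^*f$, $V_1$ maps $\ker V_2^*$ into $\sF$, while $V_1(V_2g)=Vg$ with $g\in\ker V_1^*\subseteq\sF$. So, to obtain \eqref{isomod1}, let $P$ be the projection of $\sF$ onto $V_2\ker V_1^*$ and let $U\colon b+V_2g\mapsto V_1b+g$ ($b\in\ker V_2^*$, $g\in\ker V_1^*$), which is the inverse of your map. Then for $f=b+V_2g$, $V_1V^nf=q^nV^n(V_1b+Vg)$ is literally $T_zR_q\otimes UP+R_q\otimes UP^\perp$. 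For $f=a+V_1c$ ($a\in\ker V_1^*$, $c\in\ker V_2^*$), $V_2V^nf=\bar q^{\,n}V^n(V_2a+\bar q\,Vc)$ is literally $R_{\bar q}T_z\otimes P^\perp U^*+R_{\bar q}\otimes PU^*$.

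No scalar factors are needed in $U$. All the twisting comes from $V_1V^n=q^nV^nV_1$, $V_2V^n=\bar q^{\,n}V^nV_2$ and $V_2V_1=\bar q V$. So the phase bookkeeping you flag is not the obstacle; the choice of splitting is.

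With your $(P,U)$, the same computation gives instead $V_1\cong T_zR_q\otimes P^\perp U^*+R_q\otimes PU^*$ and $V_2\cong R_{\bar q}T_z\otimes UP+R_{\bar q}\otimes UP^\perp$. The two normalizations correspond via $(P,U)\mapsto(UP^\perp U^*,U^*)$, so the canonical triple recorded after the statement fits \eqref{isomod1} only after this substitution. A quick test: for $(V_1,V_2)=(R_q,T_z)$ on $H^2$ one has $\ker V_1^*=0$. Your recipe then yields $P=I$, $U=1$, and \eqref{isomod1} would make the unitary $R_q$ unitarily equivalent to the non-unitary $T_zR_q$.

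Since the substitution respects unitary equivalence of triples, your uniqueness argument survives, but as written it only compares canonical triples. To cover an arbitrary triple realizing the model, add the check that the corrected recipe, applied to the model pair $(M_1,M_2)$, returns $(P,U)$. The needed facts are:
$\ker(M_1M_2)^*=\C e_0\otimes\sF$, $\ker M_2^*=e_0\otimes\ran P^\perp$, $\ker M_1^*=e_0\otimes U\ran P$, $M_1(e_0\otimes x)=e_0\otimes Ux$ for $x\in\ran P^\perp$, and $M_2(e_0\otimes Uw)=e_0\otimes w$ for $w\in\ran P$.
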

It is shown in \cite[Theorem 3.6]{MR5075249} that a canonical choice for the triple $(\sF,P,U)$ is 
$$
(\sF,P,U
)=\left( 
\begin{bmatrix}
    \sD_{V_1^*}\\
    \sD_{V_2^*}
\end{bmatrix},
\begin{bmatrix}
    0 & 0 \\ 0 & I_{\sD_{V_2^*}}
\end{bmatrix},
U:
\begin{bmatrix}
  D_{V_1^*}V_2^* \\
  D_{V_2^*}
\end{bmatrix}h\mapsto\begin{bmatrix}
  D_{V_1^*} \\
  D_{V_2^*}V_1^*
\end{bmatrix}h
\right).
$$The unitary $U$ above may seem to be only partially defined but by \cite[Lemma 3.5]{MR5075249}, it is uniquely determined. The unitary operator identifying the abstract pair $(V_1,V_2)$ with the model pair \eqref{isomod1} is
\begin{align}
\tau_{\rm BCL} : h \mapsto \begin{bmatrix}
    D_{V_1^*} \\
    D_{V_2^*}V_1^*
\end{bmatrix}(I_\sH - zV_2^*V_1^*)^{-1}h.
\end{align}For details, see the proof of Theorem 3.6 in \cite{MR5075249}. The unitary identification $\tau_{\rm BCL}$ and the triple $(\sF,P,U)$ do not depend on the unimodular number $q$. In view of this canonicity, we call the triple $(\sF,P,U)$ obtained in Theorem \ref{T:q-BCL} the \emph{BCL triple} associated with $(V_1,V_2)$. Although, we do not need the explicit form of $\tau_{\rm BCL}$, we retain this notation for later use.

It is convenient to work with a functional model rather than abstract operators. In particular, the explicit model shows that the commuting isometric pair
$$
\Bigl(
T_z \otimes UP + I_{H^2}\otimes UP^\perp,
\quad
T_z \otimes P^\perp U^* + I_{H^2}\otimes PU^*
\Bigr)
$$
is essentially doubly commuting if and only if the associated $q$-commuting isometric pair
$$
\Bigl(
T_z R_q \otimes UP + R_q\otimes UP^\perp,
\quad
R_{\overline{q}}T_z \otimes P^\perp U^* + R_{\overline{q}}\otimes PU^*
\Bigr)
$$is essentially doubly commuting. This is further equivalent to 
$PUP^\perp$ is compact. A detailed computation of these assertions can be found in \cite[Theorem 4.1]{MR5075249}. We will use this passage between commuting isometric pairs, $q$-commuting isometric pairs, and projection/unitary data throughout the paper; see \cite[Section 2]{MR5075249} for a coordinate-free discussion of the same.

Theorem \ref{T:q-BCL} also yields an explicit minimal $q$-commuting unitary extension $(\widetilde{V}_1,\widetilde{V}_2)$ acting on $L^2(\T)\otimes\sF$ of a $q$-commuting isometric pair, viz.,
\[
(\widetilde{V}_1,\widetilde{V}_2)\coloneqq
\Bigl(
M_z R_q \otimes UP + R_q\otimes UP^\perp,
\quad
R_{\overline{q}}M_z \otimes P^\perp U^* + R_{\overline{q}}\otimes PU^*
\Bigr).
\]This minimal unitary extension is rather strong in the sense that in fact, the product unitary $\widetilde{V}_1\widetilde{V}_2=M_z$ is a minimal unitary extension of the product isometry $V_1V_2$.

\subsection{Fredholm theory}
Fredholm theory for bounded operators on a Hilbert space will be used repeatedly, in particular the characterization of Fredholm and semi-Fredholm operators via the Calkin algebra and the resulting index. The reader is referred to the book \cite{MR1634900} for the basics of Fredholm theory. We briefly recall the definitions and basic facts needed later.

A bounded operator $T$ on $\sH$ is \emph{Fredholm} if $\ran(T)$ is closed and both $\ker T$ and $\ker T^*$ are finite-dimensional. It is \emph{semi-Fredholm} if $\ran(T)$ is closed and at least one of $\ker T$ or $\ker T^*$ is finite-dimensional.

Let $\pi:\sB(\sH)\to \sB(\sH)/\sK$ be the canonical quotient map onto the Calkin algebra. By Atkinson's theorem (see e.g., \cite[Theorem 5.17]{MR1634900}), $T$ is Fredholm if and only if $\pi(T)$ is invertible. Moreover, \cite[Theorem~2.1]{MR467392} gives an algebraic characterization of semi-Fredholmness: $T$ is left (respectively, right) semi-Fredholm if and only if $\pi(T)$ is left (respectively, right) invertible. We refer to $\pi(T)$ as the \emph{symbol} of $T$. For a semi-Fredholm operator $T$, the \textit{index} is
$$
\ind (T)=\dim\ker T-\dim\ker T^*.
$$

We will also use a Fredholm criterion in tensor-product algebras, which we recall from \cite{MR467392}. 
\begin{theorem}[See Theorem~2.2 in \cite{MR467392}]\label{T:CPcross}
Let $\sA$ be a $C^*$-algebra and $\sI\subseteq \sA$ a closed two-sided $*$-ideal. If the quotient map $\pi:\sA\to \sA/\sI$ admits a completely positive cross section $\rho$, then for any $C^*$-algebra $\sB$, $\sI\otimes \sB$ is a closed two-sided ideal in $\sA\otimes \sB$ and the quotient algebra $(\sA\otimes \sB)/( \sI\otimes \sB)$ is $*$-isomorphic to $(\sA/\sI)\otimes \sB$.
\end{theorem}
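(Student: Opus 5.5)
This is Theorem 2.2 of Berger, Coburn and Lebow; the plan follows their argument. I will use minimal (spatial) tensor products throughout, with nuclearity or exactness where it is needed.

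\textbf{Step 1: $\sI\otimes\sB$ is an ideal.} Represent $\sA\subseteq\sB(\sH)$ and $\sB\subseteq\sB(\sK)$ faithfully, so that $\sA\otimes\sB$ is the closure of the algebraic tensor product inside $\sB(\sH\otimes\sK)$. The algebraic tensor product $\sI\odot\sB$ is a two-sided $*$-ideal in $\sA\odot\sB$, since $(a\otimes b)(i\otimes c)=ai\otimes bc$ and similarly on the right. Its closure $\sI\otimes\sB$ is therefore a closed two-sided $*$-ideal in $\sA\otimes\sB$. The spatial norm on $\sI\odot\sB$ is the restriction of the spatial norm on $\sA\odot\sB$, so the two possible meanings of $\sI\otimes\sB$ coincide.

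\textbf{Step 2: the canonical surjection.} Let $\pi\otimes\mathrm{id}:\sA\otimes\sB\to(\sA/\sI)\otimes\sB$. This map exists because tensor products of $*$-homomorphisms extend to spatial tensor products; for instance, compose $\sA\otimes\sB\to\sA/\sI\otimes_{\max}\sB$ with the canonical map to the minimal tensor product. The map is surjective, since its range is closed and contains a dense set. It kills $\sI\otimes\sB$, so it induces a surjective $*$-homomorphism
\[
\Psi:(\sA\otimes\sB)/(\sI\otimes\sB)\to(\sA/\sI)\otimes\sB.
\]
It remains to show that $\Psi$ is injective, that is, that $\ker(\pi\otimes\mathrm{id})\subseteq\sI\otimes\sB$.

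\textbf{Step 3: injectivity via the cross section (main obstacle).} Since $\rho:\sA/\sI\to\sA$ is completely positive and contractive, $\rho\otimes\mathrm{id}$ extends to a completely positive contraction
\[
(\sA/\sI)\otimes\sB\to\sA\otimes\sB
\]
on the minimal tensor products. This extension property of completely positive maps is the key input. Define
\[
E\coloneqq \mathrm{id}-(\rho\otimes\mathrm{id})\circ(\pi\otimes\mathrm{id})
\]
on $\sA\otimes\sB$. On an elementary tensor,
\[
E(a\otimes b)=(a-\rho\pi(a))\otimes b\in\sI\odot\sB,
\]
because $\pi(a-\rho\pi(a))=0$. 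By linearity and continuity, $E$ maps $\sA\otimes\sB$ into $\sI\otimes\sB$. Now take $x\in\ker(\pi\otimes\mathrm{id})$. Then $x=E(x)\in\sI\otimes\sB$, which proves injectivity. A $*$-isomorphism between $C^*$-algebras is automatically isometric, so $\Psi$ is the desired $*$-isomorphism.

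The delicate point is Step 3, namely the continuity of $\rho\otimes\mathrm{id}$ on the minimal tensor product. This is where complete positivity, rather than mere positivity or linearity, is essential. I would use the standard fact that if $\varphi$ is a completely positive contraction, then $\varphi\otimes\mathrm{id}$ is a completely positive contraction for the spatial norms. One way to see this is via Stinespring: write $\rho=V^*\sigma(\cdot)V$, so that $\rho\otimes\mathrm{id}=(V\otimes I)^*(\sigma\otimes\mathrm{id})(\cdot)(V\otimes I)$, and $\sigma\otimes\mathrm{id}$ is a representation of the spatial tensor product.
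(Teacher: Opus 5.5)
The paper gives no proof of this statement (it is simply quoted from Berger--Coburn--Lebow). Your argument is the standard proof of that result and is correct: you show $\ker(\pi\otimes\mathrm{id})\subseteq\sI\otimes\sB$ via $x=x-(\rho\otimes\mathrm{id})(\pi\otimes\mathrm{id})(x)$, using that completely positive maps tensored with the identity are bounded on minimal tensor products.

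Two justifications need repair.
\begin{itemize}
\item In Step 2 there is in general no continuous map $\sA\otimes_{\min}\sB\to(\sA/\sI)\otimes_{\max}\sB$; for $\sI=0$ it would force $\|\cdot\|_{\max}\le\|\cdot\|_{\min}$. Obtain $\pi\otimes\mathrm{id}$ instead from the fact you already invoke in Step 3: a product of a possibly non-faithful representation with $\mathrm{id}_{\sB}$ is bounded by the spatial norm.
\item The cross section $\rho$ need not be contractive. It is automatically bounded, however, and boundedness is all Step 3 uses.
\end{itemize}
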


If $\sA/\sI$ is commutative and separable, then the quotient map $\pi:\sA\to \sA/\sI$ admits a completely positive cross section (see e.g., \cite{MR69403,MR355560}). We will apply this observation to the coefficient algebra $C^*(P,U,\sK)/\sK$ to guarantee the existence of a completely positive cross section to invoke the following theorem.

\begin{theorem}[See Theorem~2.3 in \cite{MR467392}]\label{T:quotient} Let $\sA$ and $\sB$ be unital $C^*$-algebras faithfully represented on $\sH$ and $\widetilde{\sH}$, respectively, with $\sK\subseteq \sA$ and $\sK\subseteq \sB$ where $\sK$ always denotes the ideal of compact operators on the relevant Hilbert space. Let $\pi:\sA\to \sA/\sK$ and $\pi':\sB\to \sB/\sK$ be the corresponding quotient maps, and assume that both $\pi$, $\pi'$ admit completely positive cross sections. Then $(\sA\otimes \sB)/\sK$ is $*$-isomorphic to a closed $*$-subalgebra of $((\sA/\sK)\otimes \sB)\oplus (\sA\otimes (\sB/\sK))$.
\end{theorem}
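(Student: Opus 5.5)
The plan is to build an explicit $*$-homomorphism out of $\sA\otimes\sB$ whose kernel is exactly the compact ideal. Throughout, $\otimes$ denotes the spatial (minimal) tensor product. This is consistent with $\sA\otimes\sB$ being the norm closure of the algebraic tensor product inside $\sB(\sH\otimes\widetilde{\sH})$. With this convention the compact operators on $\sH\otimes\widetilde{\sH}$ are $\sK(\sH\otimes\widetilde{\sH})=\sK(\sH)\otimes\sK(\widetilde{\sH})$, and this ideal lies in $\sA\otimes\sB$ because $\sK\subseteq\sA$ and $\sK\subseteq\sB$.

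Define
$$
\Phi=(\pi\otimes \mathrm{id}_{\sB})\oplus(\mathrm{id}_{\sA}\otimes\pi') : \sA\otimes\sB\longrightarrow \bigl((\sA/\sK)\otimes\sB\bigr)\oplus\bigl(\sA\otimes(\sB/\sK)\bigr).
$$
Both components are well defined because, by \Cref{T:CPcross} applied to $\pi$ (respectively $\pi'$, with the roles of the factors swapped), the quotient $(\sA\otimes\sB)/(\sK\otimes\sB)$ is $*$-isomorphic to $(\sA/\sK)\otimes\sB$. Similarly, $(\sA\otimes\sB)/(\sA\otimes\sK)$ is $*$-isomorphic to $\sA\otimes(\sB/\sK)$. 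The completely positive cross sections are used precisely here. They identify $\ker(\pi\otimes\mathrm{id})=\sK\otimes\sB$ and $\ker(\mathrm{id}\otimes\pi')=\sA\otimes\sK$. Hence
$$
\ker\Phi=(\sK\otimes\sB)\cap(\sA\otimes\sK).
$$
Since the range of a $*$-homomorphism between $C^*$-algebras is closed, $\Phi$ induces an isometric $*$-isomorphism of $(\sA\otimes\sB)/\ker\Phi$ onto a closed $*$-subalgebra of the direct sum. It therefore suffices to prove
$$
(\sK\otimes\sB)\cap(\sA\otimes\sK)=\sK\otimes\sK .
$$

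The inclusion $\supseteq$ is clear. The reverse inclusion is the one genuinely new step, and I expect it to be the main obstacle. I would prove it with an approximate-unit argument.

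Let $E_n$ be an increasing sequence of finite-rank projections on $\sH$ converging strongly to $I$; these lie in $\sK\subseteq\sA$. (If $\sH$ is non-separable, use a net instead.) Take $x$ in the intersection.

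First, since $x\in\sK\otimes\sB$, we have $(E_n\otimes I)x\to x$ in norm. For an elementary tensor $k\otimes b$ with $k$ compact, $\|E_nk-k\|\to0$. The general case follows by density of finite sums of elementary tensors and the uniform bound $\|E_n\otimes I\|\le1$.

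Second, since $x\in\sA\otimes\sK$, the element $(E_n\otimes I)x$ lies in the closure of $E_n\sA\odot\sK$. Each $E_na$ is finite rank, hence compact, so $(E_n\otimes I)x\in\sK\otimes\sK$.

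Because $\sK\otimes\sK$ is norm closed, it follows that $x\in\sK\otimes\sK$. This proves the kernel equals $\sK(\sH\otimes\widetilde{\sH})$, and hence $(\sA\otimes\sB)/\sK$ embeds as a closed $*$-subalgebra of $((\sA/\sK)\otimes\sB)\oplus(\sA\otimes(\sB/\sK))$.
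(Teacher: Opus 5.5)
Your proof is correct and is the standard argument for this result. The paper gives no proof of its own and only cites Berger--Coburn--Lebow, but the symbol discussion in \eqref{Eq:symbol} presupposes exactly your embedding $(\pi\otimes 1)\oplus(1\otimes\pi')$. Its kernel is $(\sK\otimes\sB)\cap(\sA\otimes\sK)$ by \Cref{T:CPcross}, and your approximate-unit argument correctly shows this intersection equals $\sK(\sH\otimes\widetilde{\sH})$. One small clarification: $\pi\otimes 1$ is automatically a well-defined $*$-homomorphism on the spatial tensor product, so the cross sections are needed only for the kernel. Specifically, they guarantee that the isomorphism in \Cref{T:CPcross} is the canonical one induced by $\pi\otimes 1$, so that $\ker(\pi\otimes 1)=\sK\otimes\sB$, and symmetrically for $1\otimes\pi'$.
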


As a consequence, an element $T\in \sA\otimes \sB$ is Fredholm if and only if the pair
\begin{align}\label{Eq:symbol}
\bigl((\pi\otimes 1)T,(1\otimes \pi')T\bigr)
\end{align}
is invertible in $((\sA/\sK)\otimes \sB)\oplus (\sA\otimes (\sB/\sK))$. We regard \eqref{Eq:symbol} as the \emph{symbol} of $T$ and refer to $((\sA/\sK)\otimes \sB)\oplus (\sA\otimes (\sB/\sK))$ as its symbol space. 

We now prove a preliminary lemma that will be used later.
\begin{lemma}\label{L:block-unitary-index}
Let $W\in\sB(\sH)$ be unitary and let $\sM\subset \sH$ be a closed subspace. With respect to the orthogonal decomposition $\sH=\sM\oplus\sM^{\perp}$, write
$$
W=\begin{bmatrix}A&B\\ C&D\end{bmatrix}.
$$If $B\colon\sM^{\perp}\to\sM$ is compact, then $A$ and $D$ are semi-Fredholm and
$$
\ind A=-\ind D.
$$
Moreover, $A$ is Fredholm if and only if $C$ is also compact.
\end{lemma}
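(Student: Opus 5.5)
We work in the Calkin algebra throughout, writing $P_{\sM}$ for the orthogonal projection onto $\sM$. The hypothesis is that $B=P_{\sM}WP_{\sM}^\perp$ is compact. From unitarity, $W^*W=I$ and $WW^*=I$, and reading these off blockwise gives
\begin{align*}
A^*A+C^*C&=I_{\sM}, & AA^*+BB^*&=I_{\sM},\\
B^*B+D^*D&=I_{\sM^\perp}, & CC^*+DD^*&=I_{\sM^\perp},
\end{align*}
together with the off-diagonal relations $A^*B+C^*D=0$ and $AC^*+BD^*=0$.

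Since $B$ is compact, $AA^*-I$ and $D^*D-I$ are compact. Hence $\pi(A)$ is right invertible and $\pi(D)$ is left invertible. By the Calkin-algebra characterization of semi-Fredholmness recalled from \cite[Theorem~2.1]{MR467392}, $A$ is right semi-Fredholm and $D$ is left semi-Fredholm, so both indices are defined in $\Z\cup\{\pm\infty\}$. For the moreover part: $A$ is Fredholm iff $\pi(A)$ is also left invertible. Because $\pi(A)\pi(A)^*=1$, left invertibility forces $\pi(A)$ to be unitary, i.e. $\pi(A^*A)=1$, which is equivalent to $\pi(C^*C)=0$, i.e. $C$ compact. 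Conversely, if $C$ is compact then $A^*A-I$ is compact and $A$ is Fredholm.

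For the index identity we compare $W$ with its block-diagonal part. Set $W_0=\begin{bmatrix}A&0\\ C&D\end{bmatrix}=W-\begin{bmatrix}0&B\\0&0\end{bmatrix}$, which is a compact perturbation of the unitary $W$ and hence Fredholm of index $0$. The main obstacle is that $A$ and $D$ may fail to be Fredholm when $C$ is not compact, so the additivity of index cannot be applied directly and infinite indices must be handled.

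We handle this by working with kernels directly, using the lower triangular form. First, $\ker W_0^*\supseteq\{0\}\oplus\ker D^*$, and $\ker W_0\supseteq\ker A\cap\ker C\oplus\ker D$. Next we exploit the relations. If $Ax=0$, then $\|Cx\|=\|x\|$. If $D^*y=0$, then $CC^*y=y$. The map $C^*$ sends $\ker D^*$ into $\ker A$: from $AC^*+BD^*=0$ we get $AC^*y=-BD^*y=0$. Similarly, $C$ sends $\ker A$ into $\ker D^*$: from $C^*D=-A^*B$ we take adjoints, $D^*C=-B^*A$, so $D^*Cx=0$. On these subspaces $C^*C=I$ on $\ker A$ (since $A^*A+C^*C=I$ and $Ax=0$) and $CC^*=I$ on $\ker D^*$. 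Thus $C$ restricts to a unitary $\ker A\to\ker D^*$, and $\dim\ker A=\dim\ker D^*$.

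Symmetrically, $C^*$ maps $\ker D$ onto $\ker A^*$ isometrically, using $A^*C^*=-C^*D\cdot$ (adjoint of $CA^*+DB^*=0$) and $B^*A^*$. Here $\ker A^*$ and $\ker D$ are finite-dimensional, since $A$ is right and $D$ is left semi-Fredholm, and we get $\dim\ker A^*=\dim\ker D$.

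Combining the two dimension equalities,
$$\ind A=\dim\ker A-\dim\ker A^*=\dim\ker D^*-\dim\ker D=-\ind D,$$
with both sides possibly $+\infty$ and $-\infty$ respectively. The remaining step is to verify the adjoint block relations carefully, in particular that the relevant off-diagonal terms vanish on the kernels using compactness-free exact identities. I expect this bookkeeping to be the only delicate point; the triangular perturbation $W_0$ then becomes unnecessary.
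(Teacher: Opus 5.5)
Your strategy matches the paper's. You read off block identities from $W^*W=I=WW^*$, get semi-Fredholmness from the Calkin algebra, and identify kernels by explicit unitary restrictions. Your treatment of semi-Fredholmness, of the unitary $C|_{\ker A}\colon\ker A\to\ker D^*$, and of the ``moreover'' clause is correct.

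The gap is in the ``symmetric'' step. $C^*$ does \emph{not} map $\ker D$ into $\ker A^*$ in general. The identity $A^*C^*=-C^*D$ you invoke is not one of the block relations. The $(1,2)$ entry of $W^*W=I$ is $A^*B+C^*D=0$, and no block identity controls $A^*C^*$. The adjoint of $CA^*+DB^*=0$ is $AC^*+BD^*=0$, which you already used to send $\ker D^*$ into $\ker A$.

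For a concrete failure, take $\sH=\C^3$, $\sM=\mathrm{span}\{e_1,e_2\}$ and the cyclic unitary $We_1=e_2$, $We_2=e_3$, $We_3=e_1$. Then $D=0$, so $\ker D=\C e_3$. Also $Ae_1=e_2$ and $Ae_2=0$, so $\ker A^*=\C e_1$. But $C^*e_3=e_2\notin\ker A^*$. Even the isometry claim fails in general: $\|C^*y\|^2=\|y\|^2-\|D^*y\|^2$, so for $y\in\ker D$ the norm is preserved only if also $D^*y=0$.

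The correct intertwiner for the second pair of kernels is $B^*$, as in the paper. For $x\in\ker A^*$:
\begin{itemize}
\item the $(2,1)$ entry $CA^*+DB^*=0$ of $WW^*=I$ gives $DB^*x=0$;
\item $AA^*+BB^*=I_{\sM}$ gives $BB^*x=x$.
\end{itemize}
Conversely, for $y\in\ker D$:
\begin{itemize}
\item the $(1,2)$ entry $A^*B+C^*D=0$ of $W^*W=I$ gives $A^*By=0$;
\item $B^*B+D^*D=I_{\sM^\perp}$ gives $B^*By=y$.
\end{itemize}
Hence $B^*|_{\ker A^*}\colon\ker A^*\to\ker D$ is unitary and $\dim\ker A^*=\dim\ker D$. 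With this, your index computation goes through unchanged, including the infinite cases, and the auxiliary operator $W_0$ can be dropped. In short, $C$ links $\ker A$ with $\ker D^*$ and $B$ links $\ker A^*$ with $\ker D$; the two roles are not interchangeable.
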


\begin{proof}
From the $(1,1)$-entry of $WW^*=I$ we obtain $AA^*+BB^*=I_{\sM}$. Since $BB^*$ is compact, the image of $A$ in the Calkin algebra $\sB(\sM)/\sK$ is right-invertible; hence $A$ is (right) semi-Fredholm. Similarly, the $(2,2)$-entry of $W^*W=I$ gives $B^*B+D^*D=I_{\sM^{\perp}}$, so $D$ is semi-Fredholm. We claim that the restriction $$
C\mid_{\ker A}\colon \ker A\to \ker D^*
$$ is unitary. 

To see that it is an isometry, let $x\in\ker A$. From the $(2,1)$-entry of $W^*W=I$ we have $B^*Ax+D^*Cx=0$, so $Cx\in\ker D^*$. From the $(1,1)$-entry of $W^*W=I$ we have $A^*Ax+C^*Cx=x$, hence $C^*Cx=x$.

For surjectivity, let $y\in\ker D^*$. The $(1,2)$-entry of $WW^*=I$ yields $AC^*y+BD^*y=0$, so $C^*y\in\ker A$. Using the $(2,2)$-entry of $WW^*=I$, namely $CC^*+DD^*=I_{\sM^{\perp}}$, and $D^*y=0$, we obtain $CC^*y=y$.

Therefore $\dim\ker A=\dim\ker D^*$. A similar computation shows that the map $B^*\mid_{\ker A^*}\colon\ker A^*\to\ker D$ is unitary, so $\dim\ker A^*=\dim\ker D$. Consequently,
$$
\ind A=\dim\ker A-\dim\ker A^*=\dim\ker D^*-\dim\ker D=-\ind D.
$$
For the final assertion, first assume that $C$ is compact. Then the $(1,1)$-entry of $W^*W$ gives $A^*A+C^*C=I_{\sM}$, so $A^*A-I_{\sM}\in\sK$ and hence $A$ is left semi-Fredholm. Together with the right semi-Fredholmness, this implies that $A$ is Fredholm. 

Conversely, assume that $A$ is Fredholm. Since $AA^*-I\in\sK$, we have $\pi(A)^{-1}=\pi(A^*)$ in $\sB(\sM)/\sK$. Therefore, $\pi(I-A^*A)=0$. Hence $C^*C=I-A^*A\in\sK$, which implies that $C$ is compact.
\end{proof}

\section{Tensor representation}
The operator model in Theorem~\ref{T:q-BCL} for a $q$-commuting isometric pair $(V_1,V_2)$ yields a unitary identification of $C^*(V_1,V_2)$ with a concrete $C^*$-subalgebra of $\sB(H^2\otimes \sF)$. Recall the definition of the defect ideal $\mathcal I_\sD$ from \eqref{Defect}.
\begin{theorem}\label{T:CommIdeal}
Consider the unitarily implemented embedding
$$
\iota:C^*(V_1,V_2)\to C^*(R_q,T_z)\otimes C^*(P,U)
$$
defined by
$
\iota: A\mapsto \tau_{\operatorname{BCL}}\cdot A\cdot \tau_{\operatorname{BCL}}^*
$
for every $A\in C^*(V_1,V_2)$. Then the image of the defect ideal $\sI_{\sD}$ under $\iota$ is 
$
\sK\otimes C^*(P,U).
$
\end{theorem}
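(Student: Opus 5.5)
Working in the model of Theorem~\ref{T:q-BCL}, we identify $(V_1,V_2)$ with the pair \eqref{isomod1} on $H^2\otimes\sF$ and compute the two generating defects explicitly.

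\textbf{Step 1: the defects.} Since $T_z^*T_z=I$, $T_zT_z^*=I-\widetilde{P_0}$, and $R_q$ is a unitary commuting with $\widetilde{P_0}$, expanding $V_1V_1^*$ (using $P\perp P^\perp$, so the cross terms vanish) gives
$$
V_1V_1^*=T_zR_qR_q^*T_z^*\otimes UPU^*+I\otimes UP^\perp U^* = I\otimes I-\widetilde{P_0}\otimes UPU^* .
$$
Hence $[V_1^*,V_1]=I-V_1V_1^*=\widetilde{P_0}\otimes UPU^*$. In the same way, $[V_2^*,V_2]=\widetilde{P_0}\otimes P^\perp$. Both defects lie in $\sK\otimes C^*(P,U)$. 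That set is a closed two-sided ideal of $C^*(R_q,T_z)\otimes C^*(P,U)$, because $\sK(H^2)\subset C^*(R_q,T_z)$ (indeed $\widetilde{P_0}=I-T_zT_z^*$ and $C^*(T_z)$ contains $\sK$). Therefore $\iota(\sI_\sD)\subseteq\sK\otimes C^*(P,U)$.

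\textbf{Step 2: the reverse inclusion.} The ideal $\sK\otimes C^*(P,U)$ is the closed span of the elementary tensors $E_{mn}\otimes X$, where $E_{mn}=e_m\otimes e_n^*$ and $X$ is a word in $P,P^\perp,U,U^*$. It therefore suffices to show that each such tensor lies in $\iota(\sI_\sD)$. The plan has three parts.

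\emph{(a) Recovering $\widetilde{P_0}\otimes I$ and $\widetilde{P_0}\otimes P$.} Conjugating the first defect gives
$$
V_1^*[V_1^*,V_1]V_1\in\sI_\sD .
$$
Since $\widetilde{P_0}T_z=0$ and $\widetilde{P_0}R_q=\widetilde{P_0}$, this product equals $\widetilde{P_0}\otimes P^\perp U^*UPU^*UP^\perp+\cdots$, and the plan is to reduce it to $\widetilde{P_0}\otimes P^\perp P P^\perp=0$. That particular product vanishes, so instead we use the identity $\widetilde{P_0}\otimes UPU^*=\widetilde{P_0}\otimes I-\widetilde{P_0}\otimes UP^\perp U^*$ and compute $V_1^*(\widetilde{P_0}\otimes I)V_1$-type products directly. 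The expected outcome is that $\sI_\sD$ contains $\widetilde{P_0}\otimes P$ and $\widetilde{P_0}\otimes I$, i.e. that the ideal contains $\widetilde{P_0}\otimes Y$ for every $Y$ in the unital algebra $C^*(P,U)$.

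\emph{(b) Closing up under $U,U^*$.} The defects already give $\widetilde{P_0}\otimes P^\perp$ and $\widetilde{P_0}\otimes UPU^*$. The idea is to move the second tensor factor around with the generators: $V_1(\widetilde{P_0}\otimes Y)$ and $V_2^*(\widetilde{P_0}\otimes Y)$ have second factors of the form $UPY$, $UP^\perp Y$, $PU^*Y$, and so on. Combining these produces $U$ or $U^*$ times $Y$, modulo pieces whose first factor is shifted (namely $T_z\widetilde{P_0}=E_{10}$). Compressing back with $\widetilde{P_0}\otimes I$ removes the shifted pieces. For example,
$$
(\widetilde{P_0}\otimes I)\,V_1\,(\widetilde{P_0}\otimes Y)=\widetilde{P_0}\otimes UP^\perp Y,
$$
and $V_2^*$ contributes $\widetilde{P_0}\otimes PU^*Y$. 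Iterating, we obtain $\widetilde{P_0}\otimes X$ for all words $X$.

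\emph{(c) Reaching all matrix units.} Products such as $V_1^a V_2^b(\widetilde{P_0}\otimes X)V_2^{*c}V_1^{*d}$ give $E_{mn}\otimes X'$. Since $V_1V_2=T_z\otimes(\text{unitary } U P U^*\ldots)$ up to a $q$-twisted rotation, the product $V_1V_2$ acts as $T_zR_q\otimes W$ with $W$ unitary. Thus $(V_1V_2)^m(\widetilde{P_0}\otimes X)(V_1V_2)^{*n}=c\,E_{mn}\otimes W^mXW^{*n}$ with $|c|=1$. Absorbing the unitaries $W$ into $X$, which ranges over all of $C^*(P,U)$, gives every $E_{mn}\otimes X$.

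\textbf{Main obstacle.} I expect the hardest part to be (a)–(b): showing that $\widetilde{P_0}\otimes I$ (equivalently, $\widetilde{P_0}\otimes P$) lies in $\sI_\sD$, and that the compressed products generate every word in $P,U$. The defects only give the projections $UPU^*$ and $P^\perp$ in the second slot, so $U$ itself must be extracted from mixed products using the compression trick above. I would first try
$$
(\widetilde{P_0}\otimes P^\perp)\,V_1\,(\widetilde{P_0}\otimes P^\perp)=\widetilde{P_0}\otimes P^\perp UP^\perp
$$
and the analogous expressions with $V_2^*$, verifying carefully that these generate all of $C^*(P,U)$ in the second slot.
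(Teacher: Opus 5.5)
Your Step 1 and Step 2(c) are correct and match the paper. The two defects map to $\widetilde{P_0}\otimes UPU^*$ and $\widetilde{P_0}\otimes P^\perp$. Moreover $\iota(V_1V_2)=T_z\otimes I$ exactly (no rotation and no unitary $W$ appear), so conjugating by powers of $V_1V_2$ carries $\widetilde{P_0}\otimes A$ to every $E_{mn}\otimes A$.

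The gap is in (a)--(b), which is the real content of the reverse inclusion: you never produce an element of $\iota(\sI_\sD)$ beyond the two defects themselves. Your first candidate, $V_1^*[V_1^*,V_1]V_1$, is $0$. The fallback of computing ``$V_1^*(\widetilde{P_0}\otimes I)V_1$-type products'' presupposes that $\widetilde{P_0}\otimes I$ already lies in the ideal, which is exactly what must be shown. Part (b) compresses with that same element, and ``iterating, we obtain all words'' is asserted, not checked. The test case you propose at the end cannot close the gap either. Anything sandwiched between two copies of $\widetilde{P_0}\otimes P^\perp$ lies in the corner $\widetilde{P_0}\otimes P^\perp C^*(P,U)P^\perp$, so such products never yield $\widetilde{P_0}\otimes U$ or $\widetilde{P_0}\otimes P$. (A minor slip: it is $V_2$, not $V_2^*$, whose compression gives $\widetilde{P_0}\otimes PU^*Y$; $V_2^*$ gives $\widetilde{P_0}\otimes UPY$.)

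The missing ingredient is an explicit product mixing the two defects with the generators. The paper uses
\[
\iota\bigl(V_2[V_1^*,V_1]V_2^*\bigr)=\widetilde{P_0}\otimes P,\qquad
\iota\bigl(V_1[V_2^*,V_2]+[V_1^*,V_1]V_2^*\bigr)=\widetilde{P_0}\otimes U.
\]
In both identities the unwanted terms vanish because $PP^\perp=0$ and $U^*U=I$, and $R_q$ fixes constants. Then $(\widetilde{P_0}\otimes A)(\widetilde{P_0}\otimes B)=\widetilde{P_0}\otimes AB$, together with adjoints, gives $\widetilde{P_0}\otimes C^*(P,U)\subseteq\iota(\sI_\sD)$.

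If you prefer to keep your compression scheme, the element you need is
\[
[V_1^*,V_1]+V_1[V_2^*,V_2]V_1^*=I-(V_1V_2)(V_1V_2)^*\ \stackrel{\iota}{\longmapsto}\ \widetilde{P_0}\otimes I.
\]
With this in hand, $(\widetilde{P_0}\otimes I)\,\iota(V_1+V_2^*)\,(\widetilde{P_0}\otimes I)=\widetilde{P_0}\otimes(UP^\perp+UP)=\widetilde{P_0}\otimes U$, and the rest of your plan goes through.
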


\begin{proof}
By the definition of the unitary $\tau_{\rm BCL}$, we have
$$
\iota (V_1)=T_z R_q \otimes UP + R_q\otimes UP^\perp
\quad\mbox{and}\quad
\iota (V_2)=R_{\overline{q}}T_z \otimes  P^\perp U^*+R_{\overline{q}} \otimes PU^*.
$$
For later use, we list in Table \ref{tab:three-column} the images under $\iota$ of some important commutators that lie in the defect ideal $\sI_{\sD}$.

\begin{table}[ht!]
\centering
\begin{tabular}{l c}
\toprule
$\sC$ & $\iota(\sC)$ \\
\midrule
$[V^*_1,V_1]$ & $\widetilde{P_0} \otimes UPU^*$ \\
$[V^*_2,V_2]$ & $\widetilde{P_0} \otimes P^\perp$ \\
\midrule
$V_2[V^*_1,V_1]V^*_2$ & $\widetilde{P_0} \otimes P$ \\
$V_1[V^*_2,V_2]+[V^*_1,V_1]V^*_2$ & $R_q\widetilde{P_0} \otimes U=\widetilde{P_0}\otimes U$ \\
\bottomrule
\end{tabular}
\caption{Commutators}
\label{tab:three-column}
\end{table}
The ideal $\sK\otimes C^*(P,U)$ contains the generators of $\iota(\sI_{\sD})$, so
$\iota(\sI_{\sD})\subseteq \sK\otimes C^*(P,U)$.

Since $ \widetilde{P_0}\otimes U$ and $\widetilde{P_0}\otimes P$ belong to $\iota (\sI_{\sD})$, it follows that $\widetilde{P_0}\otimes A\in \iota (\sI_{\sD})$ for any $A\in C^*(P,U)$. Moreover, because $\iota (V_1V_2)=T_z\otimes I\in \iota(C^*(V_1,V_2))$, we obtain for $n\geq 0$ that
$$
\widetilde{P_n}\otimes A = (T_z^n\otimes I)(\widetilde{P_0}\otimes A)(T_z^{*n}\otimes I)\in \iota(\sI_{\sD}).
$$
In particular, $\iota (\sI_{\sD})$ contains $F\otimes A$ for every rank-one operator $F$ and every $A\in C^*(P,U)$, and hence $\sK\otimes C^*(P,U)\subseteq \iota(\sI_{\sD})$.
\end{proof}

\noindent
Recall that for $\theta\in\mathbb R$, the rotation algebra $\mathcal A_\theta$ is the universal $C^*$-algebra generated by symbols $u$ and $v$ satisfying
$$
\{uu^*=1=u^*u,\; vv^*=1=v^*v,\; uv=e^{2\pi i \theta}vu\}.
$$
The term \emph{noncommutative torus} and \emph{irrational rotation algebra} are commonly used for $\mathcal A_\theta$ when $\theta$ is irrational. The irrational rotation algebra has been studied extensively; we refer the reader to \cite[Chapter VI]{MR1402012}. It is known that $\sA_{\theta}$ has a unique faithful trace (see \cite[Corollary VI.1.2 and Proposition VI.1.3]{MR1402012} and simple, i.e., has no nontrivial closed two-sided $*$-ideals (see, e.g., \cite[Theorem VI.1.4]{MR1402012}). Thus if $(U,V)$ is a pair of $q$-commuting unitary operators, then $C^*(U,V)$ is canonically isomorphic to $\sA_{\theta}$. For $A\in\sA_{\theta}$ define the compression $$\sigma(A)=P_{H^2}\restr{A}{H^2}:H^2\to H^2,$$ and let $\sT(\sA_\theta)$ be the $C^*$-subalgebra of $\sB(\sH^2)$ generated by $\{\sigma(A):A\in\sA_{\theta}\}$. We will use the following result of Park, which identifies $\sT(\sA_\theta)$ as an extension of $\sA_{\theta}$ by the compact operators. 

\begin{lemma}\cite[Theorem 2]{MR2176158}\label{L:SEirrational}
For irrational $\theta$, the sequence 
$$
0 \to \sK \xrightarrow{} \sT(\sA_\theta) \xrightarrow{\pi} \sA_{\theta} \to 0
$$
is short exact and admits the unital completely positive cross section $\sigma$, where $\pi$ is the canonical quotient map.
\end{lemma}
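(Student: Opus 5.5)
Since $\sT(\sA_\theta)\subseteq\sB(H^2)$ is by definition generated by the compressions $\sigma(A)$, I will prove three things in order: that $\sK\subseteq\sT(\sA_\theta)$; that $\sigma$ induces a $*$-homomorphism $\pi:\sT(\sA_\theta)\to\sA_\theta$ with kernel exactly $\sK$; and that $\sigma$ is a unital completely positive cross section of $\pi$. Complete positivity and unitality of $\sigma$ come for free, since $\sigma$ is a compression $A\mapsto P_{H^2}AP_{H^2}|_{H^2}$ of the concrete representation of $\sA_\theta$ on $L^2(\T)$ given by the $q$-commuting unitaries $(R_q,M_z)$. This representation is faithful by simplicity of $\sA_\theta$.

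\textbf{Compacts.} We have $\sigma(M_z)=T_z$, so $I-T_zT_z^*=\widetilde{P_0}\in\sT(\sA_\theta)$. Also $\sigma(R_q^k)=R_q^k|_{H^2}$ is diagonal with $R_q^k e_n=q^{kn}e_n$. Hence the operators $T_z^m\widetilde{P_0}T_z^{*n}$, i.e.\ the matrix units $e_m\otimes e_n^*$, all lie in $\sT(\sA_\theta)$, and therefore $\sK\subseteq\sT(\sA_\theta)$. Moreover $\sT(\sA_\theta)$ acts irreducibly on $H^2$.

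\textbf{Semi-multiplicativity modulo compacts.} I will show that $\sigma(AB)-\sigma(A)\sigma(B)\in\sK$ for all $A,B\in\sA_\theta$. This amounts to showing that the Hankel-type operator $P_{H^2}^\perp A|_{H^2}$ is compact. By continuity and linearity it suffices to check this on the monomials $A=R_q^kM_z^j$, which span a dense $*$-subalgebra. Since $R_q$ commutes with $P_{H^2}$, we get $P_{H^2}^\perp R_q^kM_z^jP_{H^2}=R_q^kP_{H^2}^\perp M_z^jP_{H^2}$, and this has finite rank (rank $|j|$ when $j<0$, zero when $j\ge0$). Consequently $q_\sK\circ\sigma:\sA_\theta\to\sT(\sA_\theta)/\sK$ is a $*$-homomorphism whose range generates the quotient, so it is surjective. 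Since $\sA_\theta$ is simple and the map is nonzero (it is unital), it is injective, hence a $*$-isomorphism. Define $\pi=(q_\sK\circ\sigma)^{-1}\circ q_\sK$. Then $\ker\pi=\sK$ and $\pi\circ\sigma=\mathrm{id}$, which gives exactness.

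\textbf{Main obstacle.} The key point is injectivity of the induced map $\sA_\theta\to\sT(\sA_\theta)/\sK$, that is, showing $\sigma(A)\in\sK$ forces $A=0$. Simplicity handles this directly, provided $q_\sK\circ\sigma$ is not the zero map. This holds because $\sigma(I)=I$ and $\sK\neq\sB(H^2)$, so $q_\sK(\sigma(I))\neq0$. This is precisely where the irrationality of $\theta$ is used, since for rational $\theta$ the algebra is not simple. I also need $\sT(\sA_\theta)/\sK$ to be generated by the images of $\sigma(A)$; this is immediate from the definition of $\sT(\sA_\theta)$ and the semi-multiplicativity step, because products of compressions agree with compressions of products modulo $\sK$.
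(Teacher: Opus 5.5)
Your proof is correct, and it takes a different route from the paper, which gives no argument for this lemma. The paper simply imports it from Park's Theorem~2, and separately imports the isometry $\|\sigma(A)\|=\|A\|$ from Park's Proposition~1.

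Your self-contained proof rests on two observations:
\begin{itemize}
\item The Hankel parts $P_{H^2}^\perp R_q^kM_z^j|_{H^2}=R_q^kP_{H^2}^\perp M_z^j|_{H^2}$ have finite rank, because $R_q$ is diagonal in the Fourier basis and so commutes with $P_{H^2}$. Via $\sigma(AB)-\sigma(A)\sigma(B)=P_{H^2}AP_{H^2}^\perp B|_{H^2}$, this makes $\sigma$ multiplicative modulo $\sK$.
\item Simplicity of $\sA_\theta$, which the paper already records, forces the induced unital $*$-homomorphism $\sA_\theta\to\sT(\sA_\theta)/\sK$ to be injective.
\end{itemize}

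Your route also gives more than the lemma. An injective $*$-homomorphism between $C^*$-algebras is isometric, so $\|A\|=\|\sigma(A)+\sK\|\le\|\sigma(A)\|\le\|A\|$. This is exactly the isometry property the paper cites Park's Proposition~1 for, so both imported facts follow from one elementary argument.

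The citation keeps the paper short. Your version makes it transparent that irrationality of $\theta$ enters only through simplicity of $\sA_\theta$. You use simplicity twice: once for faithfulness of the representation by $(R_q,M_z)$ on $L^2(\T)$, and once for injectivity modulo $\sK$.
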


It follows from \cite[Proposition 1]{MR2176158} that $\sigma$ is an isometry, i.e., $\|\sigma(A)\|=\|A\|$ for all $A\in \sA_{\theta}$. In what follows we use the standard identifications $\sT(\sA_{\theta})\cong C^*(R_q,T_z)$ and $C^*(R_q,M_z)\cong\sA_{\theta}$ without further comment.

\begin{lemma}\label{L:SEtensored}
The sequence
$$
0 \to \sK\otimes C^*(P,U) \xrightarrow{} C^*(R_q,T_z)\otimes C^*(P,U) \xrightarrow{\pi\otimes 1} C^*(R_q,M_z)\otimes C^*(P,U) \to 0
$$
is exact.
\end{lemma}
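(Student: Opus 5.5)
This is a direct application of \Cref{T:CPcross}, with $\sA=C^*(R_q,T_z)\cong\sT(\sA_\theta)$, $\sI=\sK$ and $\sB=C^*(P,U)$. Throughout, the tensor products are the spatial (minimal) ones, taken inside $\sB(H^2\otimes\sF)$ and $\sB(L^2(\T)\otimes\sF)$. The plan has four steps.

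\textbf{Step 1: the scalar extension.} By \Cref{L:SEirrational}, the sequence
\[
0\to\sK\to C^*(R_q,T_z)\xrightarrow{\pi}C^*(R_q,M_z)\to 0
\]
is exact for irrational $\theta$. Here $\pi$ sends $R_q\mapsto R_q$ and $T_z\mapsto M_z$. Moreover, $\pi$ admits the unital completely positive cross section $\sigma$, namely compression to $H^2$. This is exactly the hypothesis \Cref{T:CPcross} needs; we do not need the quotient to be commutative.

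\textbf{Step 2: apply \Cref{T:CPcross}.} The theorem gives two things. First, $\sK\otimes C^*(P,U)$ is a closed two-sided ideal in $C^*(R_q,T_z)\otimes C^*(P,U)$. Second, the quotient
\[
\bigl(C^*(R_q,T_z)\otimes C^*(P,U)\bigr)\big/\bigl(\sK\otimes C^*(P,U)\bigr)
\]
is $*$-isomorphic to $C^*(R_q,M_z)\otimes C^*(P,U)$.

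\textbf{Step 3: identify the isomorphism with $\pi\otimes 1$.} We must check that the isomorphism in Step 2 is the one induced by $\pi\otimes 1$. On elementary tensors $a\otimes b$ the natural map sends $a\otimes b\mapsto \pi(a)\otimes b$. Three facts then give exactness.
\begin{itemize}
\item The map $\pi\otimes 1$ is a $*$-homomorphism on the minimal tensor product.
\item It vanishes on $\sK\otimes C^*(P,U)$, so $\sK\otimes C^*(P,U)\subseteq\ker(\pi\otimes 1)$.
\item Its range is closed, being the image of a $*$-homomorphism between $C^*$-algebras. The range contains all $\pi(a)\otimes b$, and $\pi$ is surjective, so the range is all of $C^*(R_q,M_z)\otimes C^*(P,U)$.
\end{itemize}
The reverse inclusion $\ker(\pi\otimes1)\subseteq\sK\otimes C^*(P,U)$ is where the content of \Cref{T:CPcross} enters. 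The isomorphism it produces factors through $\pi\otimes1$, so the induced map from the quotient by $\sK\otimes C^*(P,U)$ is injective.

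\textbf{Step 4: the direct argument, if the factorization is not evident.} If it is not clear that the isomorphism of \Cref{T:CPcross} is induced by $\pi\otimes 1$, I would prove the kernel inclusion directly. Since $\sigma$ is unital completely positive, the map $\sigma\otimes 1$ is a well-defined completely positive contraction on the minimal tensor product, and it satisfies $(\pi\otimes 1)(\sigma\otimes 1)=\mathrm{id}$. For any $x$ in the big algebra, the difference $x-(\sigma\otimes1)(\pi\otimes1)(x)$ lies in $\sK\otimes C^*(P,U)$. This holds on the algebraic tensor product, since $a-\sigma\pi(a)\in\sK$, and it extends to the closure by continuity, because the ideal is closed. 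Hence if $(\pi\otimes1)(x)=0$, then $x\in\sK\otimes C^*(P,U)$.

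I expect this continuity and closure bookkeeping for $\sigma\otimes 1$ on the spatial tensor product to be the main obstacle. The difficulty is only technical, and \Cref{T:CPcross} already encapsulates it.
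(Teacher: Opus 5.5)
Your proposal is correct and follows the paper's route: combine the completely positive cross section $\sigma$ from \Cref{L:SEirrational} with \Cref{T:CPcross} applied to $C^*(P,U)$. Your Step 4 is also sound. It checks $\ker(\pi\otimes 1)\subseteq \sK\otimes C^*(P,U)$ directly, using the completely positive contraction $\sigma\otimes 1$ on the minimal tensor product. That is just the standard proof behind \Cref{T:CPcross}, and it spells out the ``canonical'' identification of the quotient with $C^*(R_q,M_z)\otimes C^*(P,U)$ via $\pi\otimes 1$, which the paper takes for granted.
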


\begin{proof}
Since the short exact sequence in Lemma~\ref{L:SEirrational} admits a completely positive cross section, Theorem \ref{T:CPcross} (applied to $C^*(P,U)$) implies that $\sK\otimes C^*(P,U)$ is a closed two-sided $*$-ideal of $C^*(R_q,T_z)\otimes C^*(P,U)$ and that the quotient is canonically $*$-isomorphic to $(C^*(R_q,T_z)/\sK)\otimes C^*(P,U)$. This gives the desired exactness.
\end{proof}
\begin{lemma}\label{L:Defect kernel}
The map $(\pi\otimes 1)\iota :C^*(V_1,V_2)\to C^*(R_q,M_z)\otimes C^*(P,U)$ is a $*$-homomorphism satisfying $\ker (\pi\otimes 1)\iota =\sI_{\sD}$ and $(\pi\otimes 1)\iota (V_j)=\widetilde{V}_j$ for $j=1,2$.
\end{lemma}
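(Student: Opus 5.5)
The plan is to treat $(\pi\otimes 1)\iota$ as a composition and read off each assertion from results already established. First, $\iota$ is a $*$-homomorphism, since it is conjugation by the unitary $\tau_{\rm BCL}$. It is injective, and by Theorem \ref{T:CommIdeal} it takes values in $C^*(R_q,T_z)\otimes C^*(P,U)$. By Lemma \ref{L:SEtensored}, the map $\pi\otimes 1$ is the quotient $*$-homomorphism onto $C^*(R_q,M_z)\otimes C^*(P,U)$. Hence the composition is a $*$-homomorphism.

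For the kernel, exactness in Lemma \ref{L:SEtensored} gives $\ker(\pi\otimes 1)=\sK\otimes C^*(P,U)$. Therefore
\[
\ker (\pi\otimes1)\iota=\iota^{-1}\bigl(\sK\otimes C^*(P,U)\bigr).
\]
Theorem \ref{T:CommIdeal} says $\iota(\sI_\sD)=\sK\otimes C^*(P,U)$. Since $\iota$ is injective, we get $\iota^{-1}(\sK\otimes C^*(P,U))=\sI_\sD$. There is one point to watch. The preimage is taken inside $C^*(V_1,V_2)$, and we need $\sK\otimes C^*(P,U)\cap \iota(C^*(V_1,V_2))=\iota(\sI_\sD)$. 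This holds because $\iota(\sI_\sD)\subseteq\iota(C^*(V_1,V_2))$, and it already equals the whole ideal $\sK\otimes C^*(P,U)$.

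For the generators, compute directly. Under the identification $C^*(R_q,T_z)/\sK\cong C^*(R_q,M_z)$ from Lemma \ref{L:SEirrational}, $\pi(T_z)=M_z$ and $\pi(R_q)=R_q$. The second identity requires $R_q$ on $H^2$ to be the compression $\sigma(R_q)$ of the rotation on $L^2(\T)$, which holds because rotation preserves $H^2$. Similarly $\pi(R_q T_z)=R_qM_z$ and $\pi(R_{\bar q}T_z)=R_{\bar q}M_z$. Applying $\pi\otimes1$ to
\[
\iota(V_1)=T_zR_q\otimes UP+R_q\otimes UP^\perp
\]
gives $M_zR_q\otimes UP+R_q\otimes UP^\perp=\widetilde V_1$, and the same computation gives $\widetilde V_2$.

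The only delicate step is the identification of $\pi$. We must make sure that $\pi$, defined abstractly on $\sT(\sA_\theta)$, sends the compression of $R_q$ and the compression of $M_z$ to the corresponding unitaries in $C^*(R_q,M_z)$. This should follow because $\pi\circ\sigma=\mathrm{id}$ on $\sA_\theta$ by Lemma \ref{L:SEirrational}, and because $T_z=\sigma(M_z)$ and $R_q=\sigma(R_q)$.
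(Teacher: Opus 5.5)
Your proposal is correct and follows the paper's argument. Like the paper, you identify the kernel as $\iota^{-1}\bigl(\sK\otimes C^*(P,U)\bigr)=\sI_{\sD}$ using exactness from Lemma~\ref{L:SEtensored}, Theorem~\ref{T:CommIdeal}, and injectivity of $\iota$; the paper writes this as $\iota(C^*(V_1,V_2))\cap(\sK\otimes C^*(P,U))$, implicitly identifying the kernel with its image. You also supply the check $(\pi\otimes 1)\iota(V_j)=\widetilde{V}_j$ via $\pi\circ\sigma=\mathrm{id}$, $\sigma(M_z)=T_z$ and $\sigma(R_q)=R_q$, which the paper leaves unstated.
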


\begin{proof} It is clear that $(\pi\otimes 1)\iota$ is a $*$-homomorphism. Moreover,
$$\ker(\pi\otimes1)\iota=\iota (C^*(V_1,V_2))\cap \ker (\pi\otimes1)=\iota (C^*(V_1,V_2))\cap \bigl(\sK\otimes C^*(P,U)\bigr)=\sK\otimes C^*(P,U),$$
where last equality follows from Theorem~\ref{T:CommIdeal}.
\end{proof}
We now consider the compression map $\Psi:C^*(\widetilde{V}_1,\widetilde{V}_2)\to C^*(R_q,T_z)\otimes C^*(P,U)$ defined by $$\Psi(A)\coloneqq (P_{H^2}\otimes I)\restr{A}{H^2\otimes \sF}.$$ 

\begin{lemma}
The map $\Psi$ is linear and contractive. Moreover, $\Psi(A)\in \iota (C^*(V_1,V_2))$ for every $A\in C^*(\widetilde{V}_1,\widetilde{V}_2)$.
\end{lemma}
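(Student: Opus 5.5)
Linearity is immediate, and contractivity holds because $\Psi$ is a compression by the orthogonal projection $P_{H^2}\otimes I$: $\|\Psi(A)\|\le\|P_{H^2}\otimes I\|\,\|A\|\,\|P_{H^2}\otimes I\|\le\|A\|$. The substance is the claim $\Psi(A)\in\iota(C^*(V_1,V_2))$. Since $\iota(C^*(V_1,V_2))$ is norm closed and $\Psi$ is contractive, it suffices to check this on a dense set. We use the set of linear combinations of words in $\widetilde V_1,\widetilde V_2,\widetilde V_1^*,\widetilde V_2^*$.

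The first step is to put every word in normal form. Using $\widetilde V_1\widetilde V_2=q\widetilde V_2\widetilde V_1$ together with unitarity (hence also $\widetilde V_1^*\widetilde V_2=\overline q\widetilde V_2\widetilde V_1^*$, and so on), each word is a unimodular scalar times $\widetilde V_1^{m}\widetilde V_2^{n}$ with $m,n\in\Z$. Here negative exponents mean powers of the adjoint. So it is enough to show that $\Psi(\widetilde V_1^{m}\widetilde V_2^{n})\in\iota(C^*(V_1,V_2))$ for all integers $m,n$.

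The second step uses that $H^2\otimes\sF$ is invariant under $\widetilde V_1$ and $\widetilde V_2$, which restrict there to $\iota(V_1)$ and $\iota(V_2)$. Hence for any bounded $X$ and $m,n\ge 0$,
\[
\Psi(\widetilde V_1^{*a}X\widetilde V_2^{n})=\iota(V_1)^{*a}\,\Psi(X)\,\iota(V_2)^{n},
\]
with the analogous identity when the roles are interchanged. This works because compressing to an invariant subspace is multiplicative on the right by restrictions of operators leaving it invariant, and on the left by adjoints of such. Consequently, after using the $q$-commutation to move all adjoints to the left (picking up scalars), we get
\[
\Psi(\widetilde V_1^{*a}\widetilde V_2^{*b}\widetilde V_1^{c}\widetilde V_2^{d})=\iota(V_1^{*a}V_2^{*b}V_1^{c}V_2^{d}).
\]
This is because $\Psi(\widetilde V_1^{c}\widetilde V_2^{d})=\iota(V_1^cV_2^d)$, and the adjoint factors on the left compress to $\iota(V_2)^{*b}\iota(V_1)^{*a}$. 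This places the compression of every such word in $\iota(C^*(V_1,V_2))$.

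The main point to check is that every word can indeed be reordered into the form ``adjoints on the left, non-adjoints on the right'' using only the unitary $q$-relations. This is legitimate at the level of the $\widetilde V_j$, which commute up to scalars with each other's adjoints because they are $q$-commuting unitaries. The compression identity requires only this ordering, not any relation among the $V_j$ themselves. Density of the span of these ordered words in $C^*(\widetilde V_1,\widetilde V_2)$, combined with contractivity of $\Psi$ and closedness of $\iota(C^*(V_1,V_2))$, then finishes the argument.
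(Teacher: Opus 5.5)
Your proof is correct and follows the paper's argument. Both reduce, using density, contractivity of $\Psi$ and closedness of $\iota(C^*(V_1,V_2))$, to ordered $*$-monomials $\widetilde V_1^{*a}\widetilde V_2^{*b}\widetilde V_1^{c}\widetilde V_2^{d}$, whose compressions equal $\iota(V_1^{*a}V_2^{*b}V_1^{c}V_2^{d})$; your invariance-of-$H^2\otimes\sF$ argument supplies the detail the paper leaves as ``properties of the unitary extension,'' and your stray claim that the left adjoint factors compress to $\iota(V_2)^{*b}\iota(V_1)^{*a}$ (rather than $\iota(V_1)^{*a}\iota(V_2)^{*b}$, which differs by a unimodular scalar) is a harmless slip that does not affect membership.
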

\begin{proof}
Linearity and contractivity of $\Psi$ are immediate. To see that $\Psi(A)\in \iota(C^*(V_1,V_2))$, it suffices to consider $*$-monomials $A=\widetilde{V_1}^{*n_1}\widetilde{V_2}^{*n_2}\widetilde{V_1}^{n_3}\widetilde{V_2}^{n_4}$, $n_i\geq 0 $. Since $\Psi(\widetilde{V_i})=\iota(V_i)$ and $\Psi(\widetilde{V_i}^*)=\iota(V_i^*)$ for $i=1,2$, we obatin
\begin{align*}\Psi(A)=(P_{H^2}\otimes I)\restr{A}{H^2\otimes \sF}=\iota (V_1^{*n_1}V_2^{*n_2}V_1^{n_3}V_2^{n_4})
\end{align*} using the properties of the unitary extension. 
\end{proof}

\begin{theorem}\label{T:Generalized Toeplitz} Define $s: C^*(\widetilde{V}_1,\widetilde{V}_2)\to C^*(V_1,V_2)$ by 
$$s(A)\coloneqq \iota^{-1}\Psi (A).$$

Then $s$ is a positive linear map and satisfies $(\pi\otimes 1)\iota s=1$ and $\|s(A)\|=\|A\|$.
\end{theorem}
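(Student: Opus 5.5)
Since $\iota$ is a unitarily implemented $*$-isomorphism onto its image, and the previous lemma gives $\Psi(A)\in\iota(C^*(V_1,V_2))$, the map $s=\iota^{-1}\Psi$ is well defined. Linearity is then immediate. For positivity, note that if $A\ge 0$ then $\Psi(A)=(P_{H^2}\otimes I)A|_{H^2\otimes\sF}$ is a compression of a positive operator, hence positive. Moreover, $\iota^{-1}$ is conjugation by the unitary $\tau_{\rm BCL}$, so it preserves positivity.

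For the identity $(\pi\otimes 1)\iota s=1$, observe that $(\pi\otimes1)\iota s=(\pi\otimes 1)\Psi$. This composite is linear and contractive, hence continuous, so it suffices to check it on the dense $*$-subalgebra spanned by $*$-monomials. Because $\widetilde V_1$ and $\widetilde V_2$ are $q$-commuting unitaries, every $*$-monomial reduces, up to a unimodular scalar, to the normal form $A=\widetilde V_1^{*n_1}\widetilde V_2^{*n_2}\widetilde V_1^{n_3}\widetilde V_2^{n_4}$. For such $A$, the proof of the previous lemma gives $\Psi(A)=\iota(V_1^{*n_1}V_2^{*n_2}V_1^{n_3}V_2^{n_4})$. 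Lemma~\ref{L:Defect kernel} then yields
\[
(\pi\otimes1)\Psi(A)=(\pi\otimes 1)\iota(V_1^{*n_1}V_2^{*n_2}V_1^{n_3}V_2^{n_4})=\widetilde V_1^{*n_1}\widetilde V_2^{*n_2}\widetilde V_1^{n_3}\widetilde V_2^{n_4}=A,
\]
since $(\pi\otimes1)\iota$ is a $*$-homomorphism sending $V_j$ to $\widetilde V_j$. By linearity and continuity, this extends to all of $C^*(\widetilde V_1,\widetilde V_2)$.

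For the norm equality, one direction is contractivity: $\|s(A)\|=\|\Psi(A)\|\le\|A\|$. For the reverse inequality, apply the contractive map $\pi\otimes 1$, which is a quotient $*$-homomorphism by Lemma~\ref{L:SEtensored}:
\[
\|A\|=\|(\pi\otimes1)\iota s(A)\|\le\|\iota s(A)\|=\|s(A)\|.
\]
This gives $\|s(A)\|=\|A\|$.

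The only delicate point is making sure the monomial reduction and the identification of $(\pi\otimes 1)$ on $\Psi$ of monomials are legitimate. This rests on the unitary-extension structure: compressing a normal-ordered word in the extension gives the same word in the isometries, because $H^2\otimes\sF$ is invariant under $\widetilde V_j$ and co-invariant under $\widetilde V_j^*$.
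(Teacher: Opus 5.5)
Your proposal is correct and follows the paper's route. Positivity and contractivity come from compression followed by unitary conjugation. The identity $(\pi\otimes 1)\Psi(A)=A$ is checked on normal-ordered $*$-monomials using $(\pi\otimes 1)\iota(V_j)=\widetilde{V}_j$. The norm equality follows from $\|A\|=\|(\pi\otimes1)\iota s(A)\|\le\|s(A)\|\le\|A\|$. You also supply details the paper leaves implicit: how arbitrary words reduce to the normal form via $q$-commutation of the unitaries, and why compression is multiplicative on such words, namely the invariance of $H^2\otimes\sF$ under $\widetilde{V}_1,\widetilde{V}_2$.
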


\begin{proof}
The map $s$ is positive, linear and contractive because both $\Psi$ and $\iota^{-1}$ have these properties on their respective domains. To verify $(\pi\otimes1)\iota s=1$, it again suffices to consider $*$-monomials $A=\widetilde{V_1}^{*n_1}\widetilde{V_2}^{*n_2}\widetilde{V_1}^{n_3}\widetilde{V_2}^{n_4}$, $n_i\geq 0 $. Using the computation of $\Psi(A)$ above and the properties of the minimal $q$-commuting unitary extension, we get 
\begin{align*}(\pi\otimes 1)\iota s(A)=(\pi\otimes 1)\Psi(A)=A.
\end{align*}
Finally, since $(\pi\otimes1)\iota$ is contractive,
$$\|A\|=\|(\pi\otimes 1)\iota s(A)\|\leq \|s(A)\|\leq \|A\|.
$$
Thus, $s$ is an isometry.
\end{proof}

The preceding results can be summarized in the following commutative diagram:
\begin{center}
{\tiny
\begin{tikzcd}[column sep=large, row sep=large]
0 \arrow[r] 
  & \sK\otimes C^*(P,U) \arrow[r, " "] 
  & C^*(R_q,T_z)\otimes C^*(P,U) \arrow[r, "\pi\otimes 1"] 
  &  C^*(R_q,M_z)\otimes C^*(P,U) \arrow[r] 
  & 0 \\
0 \arrow[r] 
  & \sI_{\sD} \arrow[r, " "'] \arrow[u, leftrightarrow, "\iota"] 
  & C^*(V_1,V_2) \arrow[r, "(\pi\otimes 1)\iota"'] \arrow[u, "\iota"] 
  & C^*(\widetilde{V_1},\widetilde{V_2}) \arrow[r] \arrow[u, " "] 
  \arrow[ul, bend right=15, "\Psi"] 
  \arrow[l, bend right=20, "s"'] 
  & 0
\end{tikzcd}
}
\end{center}
Since the bottom row is short exact and admits the completely positive cross section $s$, the Banach space $C^*(V_1,V_2)$ splits as  $C^*(V_1,V_2)=\sI_{\sD}\oplus s(C^*(\widetilde{V}_1,\widetilde{V}_2))$. For $A\in C^*(\widetilde{V}_1,\widetilde{V}_2)$ write $T_A\coloneqq s(A)$; we view $T_A$ as a generalized Toeplitz operator with symbol $A$.
\begin{theorem}\label{T:Represent}
Every $X\in C^*(V_1,V_2)$ can be written uniquely as $X=T_A+D$ for some $A\in C^*(\widetilde{V}_1,\widetilde{V}_2)$ and $D\in\sI_{\sD}$.
\end{theorem}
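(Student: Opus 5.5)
The statement is the standard consequence of the split short exact sequence $0\to\sI_{\sD}\to C^*(V_1,V_2)\to C^*(\widetilde V_1,\widetilde V_2)\to 0$, with $T_A=s(A)$ serving as the cross section. We will prove existence and uniqueness separately, using only Lemma~\ref{L:Defect kernel} and Theorem~\ref{T:Generalized Toeplitz}. Throughout, write $\gamma\coloneqq(\pi\otimes 1)\iota$.

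\emph{Existence.} Let $X\in C^*(V_1,V_2)$. By Lemma~\ref{L:Defect kernel}, $\gamma$ is a $*$-homomorphism with $\gamma(V_j)=\widetilde V_j$. Its image is therefore a $C^*$-algebra containing the generators $\widetilde V_1,\widetilde V_2$, and it is contained in $C^*(\widetilde V_1,\widetilde V_2)$. So $A\coloneqq\gamma(X)$ lies in $C^*(\widetilde V_1,\widetilde V_2)$. Set $D\coloneqq X-T_A = X-s(A)$. Since $s$ takes values in $C^*(V_1,V_2)$, so does $D$. By Theorem~\ref{T:Generalized Toeplitz}, $\gamma s=1$, hence
$$\gamma(D)=\gamma(X)-\gamma s(A)=A-A=0.$$
Lemma~\ref{L:Defect kernel} identifies $\ker\gamma$ with $\sI_{\sD}$, so $D\in\sI_{\sD}$ and $X=T_A+D$.

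\emph{Uniqueness.} Suppose $T_A+D=T_{A'}+D'$ with $A,A'\in C^*(\widetilde V_1,\widetilde V_2)$ and $D,D'\in\sI_{\sD}$. Apply $\gamma$ to both sides. Since $\gamma$ kills $\sI_{\sD}$ and $\gamma s=1$, we get $A=A'$. Then $D=D'$ by subtraction. (Alternatively, $s$ is linear and isometric, so $T_A-T_{A'}=T_{A-A'}\in\sI_{\sD}$ forces $A-A'=\gamma(T_{A-A'})=0$.)

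The only step needing care is the surjectivity of $\gamma$ onto $C^*(\widetilde V_1,\widetilde V_2)$, so that $A$ is a legitimate argument for $s$. This is the generator argument above: the range of a $*$-homomorphism between $C^*$-algebras is closed, and this range contains $\widetilde V_1,\widetilde V_2$. We also implicitly use that $\gamma s=1$ holds on all of $C^*(\widetilde V_1,\widetilde V_2)$, not just on $*$-monomials. This follows by linearity and continuity, since $\Psi$ and $\iota^{-1}$ are contractive and monomials span a dense subspace.
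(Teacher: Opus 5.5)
Your proof is correct and follows the paper's route. The paper obtains the decomposition directly from the short exact sequence $0\to\sI_{\sD}\to C^*(V_1,V_2)\to C^*(\widetilde V_1,\widetilde V_2)\to 0$, split by the cross section $s$ (Lemma~\ref{L:Defect kernel} and Theorem~\ref{T:Generalized Toeplitz}), and simply writes $C^*(V_1,V_2)=\sI_{\sD}\oplus s(C^*(\widetilde V_1,\widetilde V_2))$; you have spelled out the existence step $D=X-s(\gamma(X))\in\ker\gamma$, the uniqueness step via $\gamma s=1$, and the density and continuity extension of $\gamma s=1$ that the paper leaves implicit.
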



\section{Fredholm theory}\label{S:FredholmTheory}
In this section we restrict our consideration to the case when the coefficient space $\sF$ is infinite dimensional as the finite dimensional case is understood; the interested reader can see the paper \cite{MR2176158} by Park.
Let
\begin{equation*}
\pi'\colon C^*(P,U,\sK)\longrightarrow C^*(P,U,\sK)/\sK
\end{equation*}
denote the Calkin quotient map.

\begin{theorem}\label{T:Fredholm}
If $\pi'$ has a completely positive linear cross section, then for $T_A+D\in C^*(V_1,V_2)$ we have $(\pi\otimes 1)\iota(T_A+D)=A$, and $T_A+D$ is Fredholm if and only if both $A$ and $(1\otimes \pi')\iota(T_A+D)$ are invertible.
In particular, the conclusion holds if $C^*(P,U,\sK)/\sK$ is commutative.
\end{theorem}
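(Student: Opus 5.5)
The plan is to apply Theorem~\ref{T:quotient} with $\sA=C^*(R_q,T_z)$ acting on $H^2$ and $\sB=C^*(P,U,\sK)$ acting on $\sF$. Both algebras are unital and contain the compacts: $\sA\supseteq\sK(H^2)$ by Lemma~\ref{L:SEirrational}, and $\sB$ contains $\sK(\sF)$ by construction. The quotient maps also admit completely positive cross sections: for $\pi$ this is the map $\sigma$ of Lemma~\ref{L:SEirrational}, and for $\pi'$ it is the hypothesis. When $C^*(P,U,\sK)/\sK$ is commutative, it is also separable as long as $\sF$ is separable (implicit here), so the cited Choi--Effros/Arveson result gives a cross section; this proves the ``in particular'' clause.

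The first identity, $(\pi\otimes1)\iota(T_A+D)=A$, is immediate. By Lemma~\ref{L:Defect kernel}, $(\pi\otimes1)\iota$ kills $D\in\sI_{\sD}$. By Theorem~\ref{T:Generalized Toeplitz}, $(\pi\otimes1)\iota s=1$, so $(\pi\otimes 1)\iota(T_A)=A$.

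For the Fredholm criterion, note that $\iota$ is unitarily implemented, so $T_A+D$ is Fredholm if and only if $X:=\iota(T_A+D)\in C^*(R_q,T_z)\otimes C^*(P,U)\subseteq\sA\otimes\sB$ is Fredholm on $H^2\otimes\sF$. The compact operators on $H^2\otimes\sF$ are $\sK(H^2)\otimes\sK(\sF)\subseteq\sA\otimes\sB$. Theorem~\ref{T:quotient} and the remark after it then say that $X$ is Fredholm exactly when the symbol $\bigl((\pi\otimes1)X,(1\otimes\pi')X\bigr)$ is invertible in $((\sA/\sK)\otimes\sB)\oplus(\sA\otimes(\sB/\sK))$.

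An element of a direct sum is invertible iff each coordinate is. The first coordinate is $A$, viewed in $\sA_\theta\otimes\sB$. It lies in the $C^*$-subalgebra $C^*(R_q,M_z)\otimes C^*(P,U)$, and inverses in a $C^*$-algebra lie in every unital $C^*$-subalgebra containing the element, so invertibility of $A$ does not depend on the ambient algebra. The same inverse-closedness argument applies to the second coordinate. This gives the stated equivalence.

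The main point to check carefully is the justification that Fredholmness of $X$ matches invertibility in the quotient $(\sA\otimes\sB)/\sK$. This needs Atkinson's theorem together with inverse-closedness of $C^*$-subalgebras: $(\sA\otimes\sB)/\sK$ embeds in the Calkin algebra of $H^2\otimes\sF$. We also need the injectivity of the embedding in Theorem~\ref{T:quotient}, which is what turns invertibility of the symbol pair back into invertibility modulo compacts. Beyond verifying these hypotheses (unitality, containment of compacts, and the cross sections), the argument is bookkeeping.
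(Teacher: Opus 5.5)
Your proposal is correct and follows the paper's proof. The identity $(\pi\otimes1)\iota(T_A+D)=A$ comes from Lemma~\ref{L:Defect kernel} and Theorem~\ref{T:Generalized Toeplitz}, and the Fredholm criterion comes from Theorem~\ref{T:quotient} and the remark after it; your checks of the hypotheses (units, compacts, cross sections) and of inverse-closedness fill in details the paper leaves implicit.

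One small simplification: $C^*(P,U,\sK)/\sK$ is generated by $\pi'(P)$ and $\pi'(U)$, so it is automatically separable. You therefore need no separability assumption on $\sF$ for the ``in particular'' clause.
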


\begin{proof}
By Lemma~\ref{L:Defect kernel}, the map $(\pi\otimes 1)\iota$ is a $*$-homomorphism and satisfies $(\pi\otimes 1)\iota(D)=0$ for all $D\in \sI_{\sD}$. Moreover, $(\pi\otimes 1)\iota(T_A)=A$ for all $A\in C^*(\widetilde{V}_1,\widetilde{V}_2)$ (see Theorem~\ref{T:Generalized Toeplitz}). Hence $(\pi\otimes 1)\iota(T_A+D)=A$. The Fredholm criterion then follows from the discussion surrounding \eqref{Eq:symbol}.

As noted in the discussion preceding Theorem \ref{T:quotient}, when $C^*(P,U,\sK)/\sK$ is commutative and separable, the quotient map $\pi'$ admits a completely positive cross section.
\end{proof}

It is easy to see that $C^*(P,U,\sK)/\sK$ is commutative if and only if $PU-UP\in \sK$ i.e., $P$ and $U$ essentially commute. Let us consider the orthogonal decomposition $\sF=\ran P\oplus\ran P^{\perp}$. With respect to the decomposition, $U$ decomposes as $$U=\begin{bmatrix}PUP\mid_{\ran P} & PUP^\perp \mid_{\ran P^\perp}\\ P^\perp UP\mid_{\ran P} & P^\perp UP^\perp\mid_{\ran P^\perp}
\end{bmatrix}.
$$
A direct computation shows $$PU-UP=\begin{bmatrix}0 & PUP^\perp \mid_{\ran P^\perp}\\ -P^\perp UP\mid_{\ran P} & 0
\end{bmatrix}.$$
Hence $PU-UP$ is compact if and only if  $PUP^\perp $ and $ P^\perp UP $ are both compact.

In the case when $C^*(P,U,\sK)/\sK$ is commutative we obtain a concrete index theory for Fredholm operators in $C^*(V_1,V_2)$ of the form $I+D$, where $D$ belongs to the defect ideal $\sI_{\sD}$. We will use this fact frequently to construct a unitary invariant in the next section.
Let $M$ be the maximal ideal space of $C^*(\pi'U,\pi'P)$. Consider the homeomorphic embedding $\Phi\colon M\to \C^2$ defined by
$$
 \Phi(h)=\bigl(h(\pi'U),h(\pi'P)\bigr), \quad h\in M.
$$
Put $Y\coloneqq \Phi(M)$. Then $Y\subseteq \T\times\{0,1\}$, and by Gelfand duality
$$
C^*(\pi'U,\pi'P)\cong C(Y),
$$
where the coordinate functions correspond to $\pi'U\mapsto w$ and $\pi'P\mapsto \delta$.
\begin{definition}\label{D:maximal}
We say that $Y$ is \emph{maximal} if for every $w\in\T$ both $(w,0)$ and $(w,1)$ lie in $Y$, i.e., when $Y$ is the disjoint union of two copies of the unit circle.
\end{definition}
It follows from Lemma~\ref{T:CommIdeal} that $(1\otimes \pi')\iota$ maps $\sI_{\sD}$ onto $C\bigl(Y,\sK(H^2)\bigr)$.
Let $\widetilde{\sK(H^2)}$ be the group of all invertible operators on $H^2$ of the form $I+K$ with $K\in \sK(H^2)$.
By Theorem~\ref{T:Fredholm}, $I+D$ is Fredholm if and only if
$$
(1\otimes \pi')\iota (I+D)\in C\bigl(Y,\widetilde{\sK(H^2)}\bigr).
$$
Since $(\pi\otimes 1)\iota(I+D)=I$, the Fredholm index depends only on the homotopy class of the map $(1\otimes \pi')\iota (I+D)\colon Y\to \widetilde{\sK(H^2)}$.

We briefly recall the relevant homotopy computation from \cite[Section 4]{MR467392}.
The set $[Y,\widetilde{\sK(H^2)}]$ consisting of homotopy equivalent classes of maps from $Y$ into the group $\widetilde{\sK(H^2)}$, is an abelian group with respect to pointwise multiplication.
If $Y$ is maximal, then
$[Y,\widetilde{\sK(H^2)}]=\langle [f_1],[f_2]\rangle$, where
$$
 f_1(y)=I+(1-\delta)(w-1)\widetilde{P_0},
 \qquad
 f_2(y)=I+\delta(\overline{w}-1)\widetilde{P_0},
$$
with corresponding Fredholm operators
\begin{align}\label{B1B2}
B_1=I+\widetilde{P_0}\otimes P^{\perp}(U-I),
\qquad
B_2=I+\widetilde{P_0}\otimes P(U^*-I),
\end{align}
respectively, and a direct computation shows that $\ind (B_1)=\ind (B_2)$. Here recall that $\widetilde{P_0}$ is the orthogonal projection onto the space of constant functions in $H^2$. Thus if $B=I+D$ is Fredholm with $D\in \sI_{\sD}$ and $(1\otimes \pi')\iota (B)$ is homotopic to $ f_1^{n_1}f_2^{n_2}$ for some integers $n_1,n_2$, then
\begin{equation}\label{Eq:ind(B_1)}
  \ind (B)=\ind (B_1)(n_1+n_2).  
\end{equation}

We end this section with a result to be used in what follows.
\begin{theorem}\label{T:semifredimplesfred}
Suppose $(V_1,V_2)$ is a $q$-commuting isometric pair with $(\sF,P,U)$ as its BCL triple such that $C^*(P,U,\sK)/\sK$ is commutative. If $\alpha I+D$ is semi-Fredholm for some $D\in \sI_{\sD}$ and $\alpha\in \C\setminus \{0\}$, then $\alpha I+D$ is Fredholm.
\end{theorem}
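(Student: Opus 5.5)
Write $X=\alpha I+D$. Dividing by $\alpha$, we may assume $\alpha=1$, since semi-Fredholmness and Fredholmness are unchanged under nonzero scalars. By \cite[Theorem~2.1]{MR467392}, semi-Fredholmness of $X$ means that $\pi_{\sK}(X)$ is left or right invertible in the Calkin algebra of $H^2\otimes\sF$. Replacing $X$ by $X^*=I+D^*$ if needed (note $D^*\in\sI_{\sD}$), we may assume $\pi_{\sK}(X)$ is left invertible. The goal is to upgrade this to two-sided invertibility. By Theorem~\ref{T:Fredholm}, it suffices to show that the symbol pair $\bigl((\pi\otimes1)\iota(X),(1\otimes\pi')\iota(X)\bigr)$ is invertible. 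The first entry is $I$, since $D\in\sI_{\sD}=\ker(\pi\otimes1)\iota$. So everything reduces to the second entry $F\coloneqq(1\otimes\pi')\iota(X)\in C\bigl(Y,\sB(H^2)\bigr)$, which by Theorem~\ref{T:CommIdeal} has the form $F(y)=I+K(y)$ with $K\in C(Y,\sK(H^2))$.

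\textbf{Step 1: transport left invertibility to the symbol space.} Theorem~\ref{T:quotient} gives an injective $*$-homomorphism
\[
(C^*(R_q,T_z)\otimes C^*(P,U,\sK))/\sK\hookrightarrow \bigl(C^*(R_q,M_z)\otimes C^*(P,U,\sK)\bigr)\oplus\bigl(C^*(R_q,T_z)\otimes C^*(P,U,\sK)/\sK\bigr).
\]
A left inverse of $\pi_{\sK}(X)$ in the Calkin algebra need not lie in this subalgebra. However, $C^*$-subalgebras are inverse closed, and left invertibility of $a$ in a $C^*$-algebra is equivalent to invertibility of $a^*a$. Since $\pi_{\sK}(X^*X)$ lies in the image subalgebra and is invertible in the Calkin algebra, it is invertible in the subalgebra. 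Hence $F$ is left invertible in $C(Y,\sB(H^2))$, so $F(y)$ is left invertible for each $y\in Y$.

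\textbf{Step 2: the pointwise Fredholm alternative.} For each $y$, $F(y)=I+K(y)$ is a compact perturbation of the identity. It is therefore Fredholm of index $0$. Left invertibility gives injectivity, and index $0$ then gives surjectivity, so $F(y)$ is invertible. Since inversion is continuous on the invertible group, $y\mapsto F(y)^{-1}$ is continuous. Thus $F$ is invertible in $C(Y,\sB(H^2))$, and in fact $F\in C(Y,\widetilde{\sK(H^2)})$. Theorem~\ref{T:Fredholm} (via the discussion around \eqref{Eq:symbol}) then yields that $X$ is Fredholm.

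\textbf{Main obstacle.} The delicate point is Step 1: making sure the embedding of Theorem~\ref{T:quotient} actually applies to the algebra containing $\iota(X)$. This requires a completely positive cross section for $\pi'$, which is available because $C^*(P,U,\sK)/\sK$ is commutative and separable. Here separability should be justified, for instance by restricting to the separable algebra generated by $P,U$ and the compacts relevant to $X$. It also requires a completely positive cross section for $\pi$, supplied by Lemma~\ref{L:SEirrational}. The inverse-closedness argument via $X^*X$ is what lets us avoid worrying about where the Calkin left inverse lives.
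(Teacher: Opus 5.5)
Your proposal is correct and follows the same route as the paper. You pass to the symbol $(1\otimes\pi')\iota(\alpha I+D)$, which is pointwise a nonzero scalar plus a compact, upgrade one-sided to two-sided invertibility at each $y\in Y$ by the Fredholm alternative, and conclude with Theorem~\ref{T:Fredholm}. Your Step~1 (reducing to left invertibility via adjoints, then transferring it through invertibility of $\pi_{\sK}(X^*X)$ and inverse-closedness) makes explicit what the paper covers with ``as discussed earlier.''

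The separability worry in your last paragraph is moot. The algebra $C^*(P,U,\sK)/\sK$ is generated by $\pi'(P)$ and $\pi'(U)$, so it is automatically separable and no restriction to a smaller algebra is needed.
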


\begin{proof}
As discussed earlier, $\alpha I+D$ is semi-Fredholm if and only if $$
\alpha I+(1\otimes \pi')\iota(D)(y)=g(y)$$ is left or right invertible in $\{\mu I+C(Y,K):\mu\in \C\}$ for each $y\in Y$. An application of the classical Fredholm alternative (see e.g., \cite[Theorem 5.22]{MR1634900}) yields the invertibility of $g(y)$ for each $y$. Hence $g$ is invertible, and the conclusion follows. 
\end{proof}

\section{Unitary Invariant}\label{S:Unitary invariant}
In this section we impose an additional hypothesis that guarantees the existence of a completely positive cross section of the quotient map $\pi'$, so that the Fredholm criterion in Theorem~\ref{T:Fredholm} applies.  Assume that $(V_1,V_2)$ is essentially doubly $q$-commuting with $(\sF,P,U)$ as the associated BCL triple. A direct computation gives
$$
V_2^*V_1-qV_1V_2^*
=R_q\widetilde{P_0}R_q\otimes UPUP^{\perp}
=\widetilde{P_0}\otimes UPUP^{\perp}.
$$
Thus $(V_1,V_2)$ is essentially doubly $q$-commuting if and only if $PUP^{\perp}$ is compact, in which case
$$
(PUP)(PU^*P) = P + \operatorname{compact}.
$$ Thus the restriction $PUP\vert_{\mathrm{ran}P}$ is either Fredholm or semi-Fredholm of index $+\infty$. Moreover,
\begin{lemma}\label{L:FredQuotient}
If $(V_1,V_2)$ is essentially doubly $q$-commuting with $(\sF,P,U)$ as the associated BCL triple, then $PUP\vert_{\mathrm{ran}P}$ is Fredholm if and only if $C^*(P,U,\sK)/\sK$ is commutative.
\end{lemma}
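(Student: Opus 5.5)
We reduce everything to Lemma~\ref{L:block-unitary-index}, applied to the unitary $W=U$ on $\sF=\ran P\oplus\ran P^\perp$, whose block entries are
$$
A=PUP|_{\ran P},\quad B=PUP^\perp|_{\ran P^\perp},\quad C=P^\perp UP|_{\ran P},\quad D=P^\perp UP^\perp|_{\ran P^\perp}.
$$
By hypothesis $(V_1,V_2)$ is essentially doubly $q$-commuting. As computed just before the lemma, this is equivalent to compactness of $PUP^\perp$, i.e.\ of the off-diagonal block $B$. So Lemma~\ref{L:block-unitary-index} applies. It gives that $A$ is semi-Fredholm, and, more to the point, that $A$ is Fredholm if and only if $C=P^\perp UP|_{\ran P}$ is compact as well.

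We then combine this with the criterion noted in Section~\ref{S:FredholmTheory}. From the block form of $PU-UP$, the quotient $C^*(P,U,\sK)/\sK$ is commutative if and only if $PU-UP\in\sK$, which holds if and only if both $PUP^\perp$ and $P^\perp UP$ are compact. Since $PUP^\perp$ is already compact, commutativity is equivalent to compactness of $P^\perp UP$, i.e.\ of $C$.

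Chaining the two equivalences gives the statement: $PUP|_{\ran P}$ is Fredholm $\iff$ $C$ is compact $\iff$ $C^*(P,U,\sK)/\sK$ is commutative.

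One small point needs checking: that commutativity of $C^*(P,U,\sK)/\sK$ really reduces to $\pi'(P)$ and $\pi'(U)$ commuting. If $PU-UP\in\sK$, then $PU^*-U^*P=-(U^*(PU-UP)U^*)$ is compact too, so all generators and their adjoints commute modulo $\sK$, and the quotient is commutative. The converse is trivial. Beyond this there is no real obstacle; the argument is a direct assembly of Lemma~\ref{L:block-unitary-index} with the block computation of $PU-UP$.
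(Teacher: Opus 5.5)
Your proposal is correct and is essentially the paper's proof. Both arguments apply Lemma~\ref{L:block-unitary-index} to $U$ on $\ran P\oplus\ran P^\perp$, using compactness of $PUP^\perp$ to reduce Fredholmness of $PUP|_{\ran P}$ to compactness of $P^\perp UP$, and then identify this with $PU-UP\in\sK$, i.e.\ commutativity of $C^*(P,U,\sK)/\sK$. Your extra check that $PU-UP\in\sK$ forces $PU^*-U^*P\in\sK$ fills in a detail the paper leaves implicit.
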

\begin{proof}
    If $C^*(P,U,\sK)/\sK$ is commutative, then and only then $PU-UP$ is compact. This implies $P^\perp UP$ is compact. Now apply Lemma~\ref{L:block-unitary-index} to the unitary $U$ with respect to the decomposition $\sF = \operatorname{ran}P \oplus \operatorname{ran}P^\perp$ to get $PUP|_{\operatorname{ran}P}$ Fredholm. Conversely, if $PUP|_{\operatorname{ran}P}$ is Fredholm, then by Lemma \ref{L:block-unitary-index} again, both $P^\perp UP|_{\operatorname{ran}P}$ and $P UP^\perp|_{\operatorname{ran}P^\perp}$ are compact, which is same as $PU-UP$ being compact, or equivalently, the quotient $C^*(P,U,\sK)/\sK$ is commutative. 
\end{proof}
We now define
$$
\kappa(V_1,V_2)\coloneqq \ind \bigl(PUP\vert_{\mathrm{ran}P}\bigr)\in \Z\cup\{+\infty\}.
$$

\begin{theorem}\label{T:kappa_pair_invariant}
For an essentially doubly $q$-commuting isometric pair $(V_1,V_2)$, the quantity $\kappa(V_1,V_2)$ is a unitary invariant of the pair.
\end{theorem}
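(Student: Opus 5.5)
The plan is to reduce the statement to the uniqueness part of Theorem~\ref{T:q-BCL}. Here ``unitary invariant of the pair'' means: if $(V_1,V_2)$ on $\sH$ and $(V_1',V_2')$ on $\sH'$ are jointly unitarily equivalent via a unitary $W:\sH\to\sH'$ with $WV_jW^*=V_j'$, then $\kappa(V_1,V_2)=\kappa(V_1',V_2')$.

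First, I would check that essential double $q$-commutativity transfers from one pair to the other. Indeed, $V_2'V_1'^*-qV_1'^*V_2'=W(V_2V_1^*-qV_1^*V_2)W^*$ is compact whenever the unprimed operator is. Hence $\kappa(V_1',V_2')$ is defined as well.

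Next, I would observe that $\kappa$ only uses the BCL triple, so it is enough to compare the triples $(\sF,P,U)$ and $(\sF',P',U')$. Theorem~\ref{T:q-BCL} is stated for pure pairs. The definition of $\kappa$ implicitly assumes the pair has a BCL triple, i.e., that it is pure, so I would assume purity throughout. Purity is itself preserved under $W$, since $\bigcap_n (V_1'V_2')^n\sH'=W\bigcap_n(V_1V_2)^n\sH=\{0\}$.

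Alternatively, one can avoid quoting uniqueness by using the canonical triple directly. The spaces $\sD_{V_j^*}$ are carried to $\sD_{V_j'^*}$ by $W$, because $D_{V_j'^*}=WD_{V_j^*}W^*$. The map $\tau=W\oplus W$ restricted to $\sD_{V_1^*}\oplus\sD_{V_2^*}$ is then a unitary onto $\sF'$. It intertwines the block projections, and from the defining formula of $U$ one gets $\tau U=U'\tau$. Either way, there is a unitary $\tau:\sF\to\sF'$ with $\tau P=P'\tau$ and $\tau U=U'\tau$.

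Finally, $\tau$ maps $\ran P$ onto $\ran P'$, and
\[
\tau\,(PUP|_{\ran P})\,\tau^*|_{\ran P'} = P'U'P'|_{\ran P'}.
\]
So the two compressions are unitarily equivalent. Consequently their kernels and cokernels have equal dimensions, and their ranges are simultaneously closed. Semi-Fredholmness (as established before Lemma~\ref{L:FredQuotient}) and the index, including the value $+\infty$, therefore agree.

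There is no genuine obstacle here. The only point needing care is the well-definedness issue: the triple is unique only up to unitary equivalence. The final step shows that the index is insensitive to this choice, so the same argument shows that $\kappa$ does not depend on which BCL triple is chosen.
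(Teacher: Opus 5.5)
Your proposal is correct and follows the paper's proof. Both arguments use the uniqueness part of Theorem~\ref{T:q-BCL} to get a unitary $\tau\colon\sF\to\sF'$ with $\tau(P,U)=(P',U')\tau$, so that $PUP|_{\ran P}$ and $P'U'P'|_{\ran P'}$ are unitarily equivalent and have the same index, including the value $+\infty$. Your extra checks are correct details the paper leaves implicit: that essential double $q$-commutativity and purity transfer under $W$, and the direct intertwining of the canonical triples via $W\oplus W$, which uses only $WD_{V_j^*}W^*=D_{V_j'^*}$.
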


\begin{proof}
Let $(V_1,V_2)$ and $(V_1',V_2')$ be essentially doubly $q$-commuting pairs of isometries on $\sH$ and $\sH'$, respectively, and let $(\sF,P,U)$ and $(\sF',P',U')$ be their corresponding BCL triples. By \Cref{T:q-BCL}, the pairs $(V_1,V_2)$ and $(V_1',V_2')$ are unitarily equivalent if and only if the triples $(\sF,P,U)$ and $(\sF',P',U')$ are unitarily equivalent. This implies $PUP\vert_{\mathrm{ran}P}$ and $P'U'P'\vert_{\mathrm{ran}P'}$ are unitarily equivalent, and hence have the same Fredholm index.
\end{proof}
As observed in the discussion preceding Theorem \ref{T:quotient}, when the quotient $C^*(P,U,\sK)/\sK$ is commutative, the quotient map $\pi'$ admits a completely positive cross section. 
 
The result below shows that $\pi'$ admits a completely positive cross section even when $(V_1,V_2)$ is essentially doubly $q$-commuting.

\begin{theorem}\label{T:cp_section}
For an essentially doubly $q$-commuting isometric pair $(V_1,V_2)$, the quotient map $\pi'$ admits a completely positive cross section. Moreover, if $\kappa(V_1,V_2)=0$ then $\pi'$ admits a $*$-isomorphic cross section.
\end{theorem}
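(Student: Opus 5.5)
The plan is to obtain the completely positive lift from the Choi--Effros lifting theorem, after showing that the quotient $\sB\coloneqq C^*(P,U,\sK)/\sK$ is nuclear. Throughout, as elsewhere in the paper, I will assume that $\sF$ is separable, so that $C^*(P,U,\sK)$ is separable. Write $p=\pi'(P)$, $u=\pi'(U)$ and $e=1-p$. The essential double $q$-commutativity says that $PUP^\perp$ is compact, so $pue=0$. Hence $ue=eue$, and therefore $ueu^*=eue\,u^*e\le e$. In words, $e$ is a projection with $ueu^*\le e$.

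From this relation I will extract the structure of $\sB$ in three steps.
\begin{enumerate}
\item The projections $e_n\coloneqq u^ne u^{-n}$, $n\in\Z$, form a decreasing chain, so they commute. They therefore generate a commutative $C^*$-algebra $\sC\subseteq\sB$.
\item The algebra $\sC$ is invariant under $\operatorname{Ad}u$, and $\sB$ is generated by $\sC$ together with the unitary $u$.
\item Hence $\sB$ is the image of the crossed product $\sC\rtimes_{\operatorname{Ad}u}\Z$ under the canonical covariant representation.
\end{enumerate}
A crossed product of a commutative (hence nuclear) $C^*$-algebra by the amenable group $\Z$ is nuclear, and quotients of nuclear $C^*$-algebras are nuclear. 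So $\sB$ is separable and nuclear, and the Choi--Effros theorem gives a unital completely positive cross section of $\pi'$.

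This also recovers the commutative case noted before Theorem~\ref{T:quotient}. The main point to check carefully is step 3: that the covariant pair $(\sC,u)$ really induces a surjection from the full crossed product onto $\sB$. This is just the universal property applied to the inclusion of $\sC$ and to $u$.

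For the second assertion, suppose $\kappa(V_1,V_2)=0$. Then $PUP|_{\ran P}$ is Fredholm, so by Lemma~\ref{L:FredQuotient} the quotient $\sB$ is commutative, and both $PUP^\perp$ and $P^\perp UP$ are compact. By Lemma~\ref{L:block-unitary-index}, $P^\perp UP^\perp|_{\ran P^\perp}$ also has index $0$. An index-zero Fredholm operator $T$ with $TT^*-I$ compact is a compact perturbation of a unitary: take the partial isometry in its polar decomposition and correct it by a finite-rank map between the equidimensional kernels. So we get unitaries $W_1$ on $\ran P$ and $W_2$ on $\ran P^\perp$ with $U'\coloneqq W_1\oplus W_2$ satisfying $U-U'\in\sK$. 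Now $U'$ commutes with $P$, and $\pi'(U')=u$.

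To get a $*$-homomorphic section $C(Y)\cong\sB\to C^*(P,U,\sK)$ via $f\mapsto f(U',P)$, I need the joint spectrum of the commuting pair $(U',P)$ to lie inside $Y$. By construction it contains $Y$, but it may be larger because of isolated eigenvalues. This is the step I expect to be the main obstacle. I would handle it with the Weyl--von Neumann--Berg theorem, applied separately to each $W_j$: a normal operator (here a unitary) is a compact perturbation of a diagonal normal operator whose spectrum equals its essential spectrum. The perturbation can be kept unitary by choosing the diagonal entries on $\T$. After this replacement, $\sigma(W_1)=\{w:(w,1)\in Y\}$ and $\sigma(W_2)=\{w:(w,0)\in Y\}$.

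With these spectra, the continuous functional calculus $f\mapsto f(U',P)$, for $f\in C(Y)$, is a well-defined $*$-homomorphism. Since $U'-U$ is compact, $\pi'\circ$ (this map) sends $w\mapsto u$ and $\delta\mapsto p$, so it is the identity on $\sB$. It is injective because $\pi'$ composed with it is the identity. Hence it is a $*$-isomorphic cross section of $\pi'$, as claimed.
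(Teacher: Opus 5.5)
Your argument is correct, apart from two easily repaired details noted below, but it takes a different route from the paper. The paper's proof is a pure reduction. The BCL triple $(\sF,P,U)$ is the same for $(V_1,V_2)$ and for the associated commuting pair $(R_{\overline{q}}V_1,V_2R_q)$, so the map $\pi'$ and the condition $PUP^\perp\in\sK$ are the same too, and the commuting pair is essentially doubly commuting. The paper therefore just quotes \cite[Theorem 5.3]{MR467392} for the completely positive section and \cite[Lemma 7.1]{MR467392} for the $*$-isomorphic section when $\kappa=0$.

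You instead work directly with the triple. The relation $ueu^*\le e$ exhibits $C^*(P,U,\sK)/\sK$ as a quotient of $\sC\rtimes\Z$ with $\sC$ commutative. That quotient is therefore nuclear, and Choi--Effros gives the lift. For $\kappa=0$ you build the section by hand from a block-diagonal compact perturbation of $U$ together with Weyl--von Neumann--Berg.

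Your route is self-contained and makes it transparent that $q$ plays no role. It also shows the first assertion needs no separability of $\sF$: the quotient is generated by $p$ and $u$, hence separable, and that is all Choi--Effros requires. The price is heavier general machinery: nuclearity of $\Z$-crossed products and of quotients, and the lifting theorem. The paper's two lines instead shift the work onto checking that the hypotheses and notation of \cite{MR467392} transfer.

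The two details are as follows.

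\begin{enumerate}
\item Take $\sC$ to be the \emph{unital} $C^*$-algebra generated by the $e_n$. Otherwise the integrated form of the covariant pair has range $\overline{\operatorname{span}}\{c\,u^n\}$, which need not contain $u$.
\item In the $\kappa=0$ part, suppose $0<\operatorname{rank}P<\infty$. Then $p=0$, so $\{w:(w,1)\in Y\}=\emptyset$ while $\sigma(W_1)\neq\emptyset$. The claimed spectral equality fails, and $f\mapsto f(U',P)$ is not even well defined, since the coordinate $\delta$ vanishes on $Y$ but $P\neq 0$. The form of Weyl--von Neumann--Berg you use also needs an infinite-dimensional space. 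The fix is to replace $P$ by its compact perturbation $0$ (or $I$ if $P^\perp$ has finite rank) and diagonalize $U$ on all of $\sF$. Then $f\mapsto f(U'',0)$ (respectively $f(U'',I)$) is the required section.
\end{enumerate}
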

\begin{proof}
Let $(V_1,V_2)$ be essentially doubly $q$-commuting with BCL triple $(\sF,P,U)$. Then the associated commuting pair $(R_{\overline{q}}V_1, V_2R_{q})$ is essentially doubly commuting with the same BCL triple. By \cite[Theorem 5.3]{MR467392}, the quotient map $\pi'$ admits a completely positive cross section. If, in addition, $\kappa(V_1,V_2)=0$, then \cite[Lemma 7.1]{MR467392} shows that $\pi'$ admits a $*$-isomorphic cross section.
\end{proof}
\begin{theorem}\label{indformula}
Assume that $\kappa(V_1,V_2)$ is finite and nonzero. Let $Y$, as in the discussion preceding Theorem \ref{T:semifredimplesfred}, be the homeomorphic image of the maximal ideal space of the (commutative) quotient algebra $C^*(P,U,\sK)/\sK$ associated to $(V_1,V_2)$. Then $Y$ is maximal in the sense of Definition \ref{D:maximal}, and every Fredholm operator of the form $I+D$ with $D\in\sI_{\sD}$ satisfies
$$
\ind (I+D)=-\kappa(V_1,V_2)(n_1+n_2),
$$
where $(1\otimes \pi')\iota(I+D)$ is homotopic to $f_1^{n_1}f_2^{n_2}$ for some integers $n_1,n_2$.
\end{theorem}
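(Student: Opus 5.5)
The proof has three steps: show $Y$ is maximal, compute $\ind(B_1)$, and then invoke \eqref{Eq:ind(B_1)}.

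\emph{Commutativity and the setup.} Since $\kappa(V_1,V_2)$ is finite, $PUP\vert_{\ran P}$ is Fredholm. By Lemma~\ref{L:FredQuotient}, $C^*(P,U,\sK)/\sK$ is then commutative. So Theorem~\ref{T:Fredholm} and the homotopy discussion preceding Theorem~\ref{T:semifredimplesfred} apply.

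\emph{Maximality of $Y$.} We must show that for every $w\in\T$ both $(w,0)$ and $(w,1)$ lie in $Y$. Equivalently, the spectra of $\pi'(PUP)$ in the corner Calkin algebra of $\ran P$ and of $\pi'(P^\perp UP^\perp)$ in that of $\ran P^\perp$ are all of $\T$. Modulo compacts, $PUP\vert_{\ran P}$ is unitary, since $PUP^\perp$ and $P^\perp UP$ are compact. Hence its essential spectrum lies in $\T$.

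Suppose some $w_0\in\T$ is not in its essential spectrum. Then $PUP\vert_{\ran P}-\lambda I$ is Fredholm for all $\lambda$ on the arc joining $w_0$ to a point of modulus $>1$ (or $<1$). The index is locally constant on the Fredholm domain, and the complement of a proper closed subset of $\T$ is connected in $\C$. So the index equals the index at large $|\lambda|$ or at $\lambda=0$. Taking $\lambda$ outside the closed disk gives index $0$, while $\lambda=0$ gives index $\kappa$. Since $\C\setminus\sigma_e$ is connected, these must agree, forcing $\kappa=0$, a contradiction. Thus $\sigma_e(PUP\vert_{\ran P})=\T$.

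The same argument applies to $P^\perp UP^\perp\vert_{\ran P^\perp}$. By Lemma~\ref{L:block-unitary-index} its index is $-\kappa\neq 0$, so its essential spectrum is also $\T$. The points of $Y$ with $\delta=1$ correspond to characters not killing $\pi'(P)$, i.e.\ to the spectrum of $\pi'(U)$ in the corner $\pi'(P)(\cdot)\pi'(P)$. Therefore $\T\times\{0,1\}\subseteq Y$, and $Y$ is maximal.

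\emph{Computing $\ind(B_1)$.} By maximality, \eqref{Eq:ind(B_1)} gives $\ind(I+D)=\ind(B_1)(n_1+n_2)$, so it remains to show $\ind(B_1)=-\kappa$. Write $B_1=I+\widetilde{P_0}\otimes P^\perp(U-I)$. With respect to $H^2\otimes\sF=(\ran\widetilde{P_0}\otimes\sF)\oplus(\ran \widetilde{P_0}^\perp\otimes\sF)$, $B_1$ is the identity on the second summand. On the first summand, identified with $\sF$, it acts as
$$I+P^\perp(U-I)=P+P^\perp U.$$
Hence $\ind(B_1)=\ind(P+P^\perp U)$. In the decomposition $\sF=\ran P\oplus\ran P^\perp$ this operator is
$$\begin{bmatrix}I&0\\ P^\perp UP & P^\perp UP^\perp\end{bmatrix}.$$
Since $P^\perp UP$ is compact, it has the same index as $I\oplus P^\perp UP^\perp\vert_{\ran P^\perp}$. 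By Lemma~\ref{L:block-unitary-index} that index is $-\ind(PUP\vert_{\ran P})=-\kappa$.

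As a consistency check, $B_2$ restricts to $P^\perp+PU^*$. Modulo compacts this is $P^\perp\oplus PU^*P$, with index $\ind(PU^*P\vert_{\ran P})=-\kappa$, matching the paper's remark that $\ind B_1=\ind B_2$.

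\emph{Main obstacle.} The delicate step is maximality: translating ``$(w,\delta)\in Y$'' into essential spectra of the corner compressions, and running the connectedness/index argument rigorously. The index computation is routine.
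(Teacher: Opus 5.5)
Your proposal is correct and follows the paper's proof. The computation $\ind(B_1)=\ind(P+P^\perp U)=\ind(P^\perp UP^\perp|_{\ran P^\perp})=-\kappa(V_1,V_2)$, using the $\widetilde{P_0}$-splitting, compactness of $P^\perp UP$ and Lemma~\ref{L:block-unitary-index}, is exactly the paper's argument; your $P+P^\perp U$ is the correct restriction, and the paper's $P^\perp U-P$ is a sign slip that does not affect the index. For maximality of $Y$ the paper simply cites Berger--Coburn--Lebow (Theorem~6.1), whereas you prove it directly from the standard fact that an essentially unitary Fredholm operator with nonzero index has essential spectrum equal to $\T$, which is a valid self-contained substitute.
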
 
\begin{proof} The proof that $Y$ is maximal is the same as in \cite[Theorem 6.1]{MR467392}. By the index formula \eqref{Eq:ind(B_1)}, it suffices to show that $\ind(B_1)=-\kappa(V_1,V_2)$, where $B_1$ is the Fredholm operator as in \eqref{B1B2}. With respect to the decomposition $H^2(\sF)=\sF\oplus (\operatorname{ran}\widetilde{P_0}^\perp\otimes \sF)$, the operator $B_1$ is the direct sum of $P^\perp U-P$ acting on $\sF$ and the identity operator on $\operatorname{ran}\widetilde{P_0}^\perp\otimes \sF$. Thus $\ind(B_1)=\ind(P^\perp U-P)$. With respect to the decomposition $\sF = \operatorname{ran}P +\operatorname{ran}P^\perp$, we have
\begin{align}\label{Computation}
P^\perp U - P = \begin{bmatrix}
    -I_{\operatorname{ran}P} & 0\\
    0 & P^\perp UP^\perp|_{\operatorname{ran}P^\perp}
\end{bmatrix}  + \begin{bmatrix}
     0 & 0\\
     P^\perp UP|_{\operatorname{ran}P} & 0
\end{bmatrix}.
\end{align}Now $P^\perp UP = UP - PUP = UP - PU + PUP^\perp$. Since $(V_1,V_2)$ is essentially doubly $q$-commuting, $PUP^\perp$ is compact. Since $\kappa(V_1,V_2)$ is finite, by Lemma \ref{L:FredQuotient}, $PU-UP$ is compact, hence $P^\perp UP$ is compact. Since the index is invariant under compact perturbation, by \eqref{Computation}, we have
$$
\ind(B_1) = \ind(P^\perp U -P) = \ind(P^\perp U P^\perp|_{\operatorname{ran}P^\perp} = -\ind(PUP|_{\operatorname{ran}P}) = -\kappa(V_1,V_2),
$$where to obtain the second equality we used Lemma \ref{L:block-unitary-index}.
\end{proof}

We now consider a distinguished element of the form $I+D$ with $D$ belongs to the defect ideal, namely $I+D(V_1,V_2)$, where
\[
D(V_1,V_2)\coloneqq [V_2^*,V_2](V_1-I)[V_2^*,V_2].
\]
Clearly, $D(V_1,V_2)\in\sI_{\sD}$. The next theorem shows that $I+D(V_1,V_2)$ is semi-Fredholm and provides an explicit index formula.

\begin{theorem}\label{T:I+D}
For an essentially doubly $q$-commuting isometric pair $(V_1,V_2)$ with infinite dimensional coefficient space, the operator $I+D(V_1,V_2)$ is semi-Fredholm and
$$
\kappa(V_1,V_2)=-\ind (I+D(V_1,V_2))
$$

\end{theorem}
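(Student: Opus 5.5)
Transport everything to the model via $\iota$, where $I+D(V_1,V_2)$ reduces to an explicit operator on $\sF$. From Table \ref{tab:three-column}, $\iota([V_2^*,V_2])=\widetilde{P_0}\otimes P^\perp$ and $\iota(V_1)=T_zR_q\otimes UP+R_q\otimes UP^\perp$. Since $\widetilde{P_0}T_zR_q\widetilde{P_0}=0$ and $\widetilde{P_0}R_q\widetilde{P_0}=\widetilde{P_0}$ (constants are fixed by $R_q$), we get
$$
\iota\bigl(D(V_1,V_2)\bigr)=(\widetilde{P_0}\otimes P^\perp)\bigl(\iota(V_1)-I\bigr)(\widetilde{P_0}\otimes P^\perp)=\widetilde{P_0}\otimes P^\perp(UP^\perp-I)P^\perp=\widetilde{P_0}\otimes(P^\perp UP^\perp-P^\perp).
$$
Hence, in the decomposition $H^2(\sF)=\sF\oplus(\ran\widetilde{P_0}^\perp\otimes\sF)$ used in the proof of Theorem \ref{indformula}, $\iota(I+D(V_1,V_2))$ is the direct sum of $P+P^\perp UP^\perp$ on $\sF$ and the identity elsewhere. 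Semi-Fredholmness and the index of $I+D(V_1,V_2)$ are therefore those of $P+P^\perp UP^\perp$ on $\sF$, which in turn, with respect to $\sF=\ran P\oplus\ran P^\perp$, is $I_{\ran P}\oplus P^\perp UP^\perp|_{\ran P^\perp}$. Thus both properties reduce to those of $D_0\coloneqq P^\perp UP^\perp|_{\ran P^\perp}$.

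Next apply Lemma \ref{L:block-unitary-index} to $U$ with respect to $\sF=\ran P\oplus\ran P^\perp$. Essential double $q$-commutativity means $PUP^\perp$ is compact, and this is exactly the off-diagonal entry $B$ in the lemma. So both $A=PUP|_{\ran P}$ and $D_0$ are semi-Fredholm with $\ind D_0=-\ind A=-\kappa(V_1,V_2)$. Consequently $I+D(V_1,V_2)$ is semi-Fredholm and
$$
\ind\bigl(I+D(V_1,V_2)\bigr)=\ind D_0=-\kappa(V_1,V_2),
$$
which is the claim. Here the infinite-dimensionality of $\sF$ (assumed in Section \ref{S:FredholmTheory}) is only needed so that $\kappa$ can be $+\infty$ in general. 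When $\kappa=+\infty$ the identity reads $-\infty=-\infty$, and the lemma's unitary correspondences $\ker A\cong\ker D_0^*$ and $\ker A^*\cong\ker D_0$ still give it, interpreting indices in $\Z\cup\{\pm\infty\}$.

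The main obstacle is the compression computation: correctly using $\widetilde{P_0}T_z=0$ on the left factor and keeping track of which projections survive. One must make sure that $R_q$ acts trivially on constants and that $P^\perp UPP^\perp=0$, so that the $T_zR_q\otimes UP$ term disappears completely rather than leaving a residual rank-one piece. A second point is that the $\kappa=+\infty$ case requires treating the index in the extended sense; Lemma \ref{L:block-unitary-index}'s kernel identifications handle this directly, without needing any Fredholm hypothesis.
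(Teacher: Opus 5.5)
Your proof is correct and follows essentially the same route as the paper: transport via $\iota$ to obtain $\iota(I+D(V_1,V_2))=(I-\widetilde{P_0})\otimes I+\widetilde{P_0}\otimes(P+P^\perp UP^\perp)$, reduce to $P^\perp UP^\perp|_{\ran P^\perp}$, and apply Lemma \ref{L:block-unitary-index} to $U$ using compactness of $PUP^\perp$. Your explicit remark that the lemma's kernel identifications also cover the case $\kappa=+\infty$ (index $-\infty$ on the other side) is a small clarification the paper leaves implicit.
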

\begin{proof}
A direct computation gives $I+D(V_1,V_2)=V_2V_2^*+[V_2^*,V_2]V_1[V_2^*,V_2]$. By Theorem \ref{T:CommIdeal}, this operator unitarily equivalent to
\begin{align*}
&I-(\widetilde{P_0}\otimes P^\perp)+ (\widetilde{P_0}\otimes P^\perp)(T_z R_q \otimes UP + R_q\otimes UP^\perp)(\widetilde{P_0}\otimes P^\perp)\\
&= (I-\widetilde{P_0})\otimes I+\widetilde{P_0}\otimes P+ \widetilde{P_0}R_q\widetilde{P_0}\otimes P^\perp UP^\perp=(I-\widetilde{P_0})\otimes I+\widetilde{P_0}\otimes (P+P^\perp UP^\perp),
\end{align*}
where we used $\widetilde{P_0}R_q\widetilde{P_0}=\widetilde{P_0}$. With respect to the decomposition $H^2\otimes \sF=\widetilde{P_0}H^2\otimes\sF\oplus(I-\widetilde{P_0})H^2\otimes\sF=\C e_0\otimes\sF\oplus(I-\widetilde{P_0})H^2\otimes\sF$, we obtain the block diagonal form

$$\iota(I+D(V_1,V_2))=\begin{bmatrix}
P+P^\perp UP^\perp & 0\\
0&I  
\end{bmatrix}.$$
Thus it suffices to show that $P+P^\perp UP^\perp$ is semi-Fredholm on $\sF$. Equivalently, the restriction $P^\perp UP^\perp\vert_{\ran P^\perp}$ is semi-Fredholm on $\ran P^\perp$ and $$\ind (P^\perp UP^\perp\mid_{\ran P^\perp})=-\ind PUP\mid_{\textit{ran} P}.$$ Since $PUP^\perp$ is compact, we apply Lemma~\ref{L:block-unitary-index} to the block decomposition of the unitary $$U=\begin{bmatrix}PUP\vert_{\ran P} & PUP^\perp \vert_{\ran P^\perp}\\ P^\perp UP\vert_{\ran P} & P^\perp UP^\perp\vert_{\ran P^\perp}
\end{bmatrix}
$$ and obtain the desired conclusion.
\end{proof}

\begin{theorem}\label{T:ind(I+D)=0}
If $\kappa(V_1,V_2)=0$ and $D$ in the defect ideal $\sI_{\sD}$ is such that $I+D$ is Fredholm, then $\ind(I+D)=0$.
\end{theorem}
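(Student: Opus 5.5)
The plan is to use the $*$-isomorphic cross section supplied by Theorem \ref{T:cp_section} when $\kappa(V_1,V_2)=0$. With it, we will homotope $I+D$ through Fredholm operators to one whose index is visibly zero. The alternative would be to compute $\ind(B_1)$ and $\ind(B_2)$ directly and invoke \eqref{Eq:ind(B_1)}. Since $\kappa=0$ is finite, the argument of Theorem \ref{indformula} still applies verbatim, because it only used Lemma \ref{L:FredQuotient}, Lemma \ref{L:block-unitary-index} and finiteness of $\kappa$. It gives $\ind(B_1)=-\kappa(V_1,V_2)=0$. The trouble is that \eqref{Eq:ind(B_1)} was derived assuming $Y$ is maximal, and $Y$ need not be maximal when $\kappa=0$. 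So we should not rely on that route, and will argue as follows.

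First, since $\kappa(V_1,V_2)=0$ is finite, Lemma \ref{L:FredQuotient} shows that $C^*(P,U,\sK)/\sK\cong C(Y)$ is commutative. Theorem \ref{T:Fredholm} then says $I+D$ is Fredholm exactly when $g:=(1\otimes\pi')\iota(I+D)$ lies in $C(Y,\widetilde{\sK(H^2)})$. Moreover, $\ind(I+D)$ depends only on the homotopy class $[g]\in[Y,\widetilde{\sK(H^2)}]$: homotopies of symbols lift to norm-continuous paths of Fredholm operators of the form $I+D_t$, using the cross section and Theorem \ref{T:CommIdeal}.

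Second, by Theorem \ref{T:cp_section} there is a $*$-homomorphic cross section $\rho:C(Y)\to C^*(P,U,\sK)$. Then $1\otimes\rho$ is a $*$-homomorphism from $\sK(H^2)\otimes C(Y)=C(Y,\sK(H^2))$ into $\sK\otimes C^*(P,U,\sK)$, and it splits $1\otimes\pi'$. Set $D':=\iota^{-1}(1\otimes\rho)(g-I)$; this lies in $\iota^{-1}(\sK\otimes C^*(P,U))=\sI_{\sD}$, at least after checking that $\rho$ lands in $C^*(P,U)$ modulo compacts. Since $1\otimes\rho$ is unital on the unitization, $I+D'=\iota^{-1}(1\otimes\rho)(g)$ is \emph{invertible}, with inverse $\iota^{-1}(1\otimes\rho)(g^{-1})$. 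Hence $\ind(I+D')=0$.

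Third, $I+D$ and $I+D'$ have the same symbol $g$ under both components of \eqref{Eq:symbol}. Their difference therefore lies in $\ker(\pi\otimes1)\cap\ker(1\otimes\pi')$, which by Theorem \ref{T:quotient} is the compacts. The index is stable under compact perturbation, so $\ind(I+D)=\ind(I+D')=0$.

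The main obstacle is the second step. One must make sure that $1\otimes\rho$ really produces elements of $\sI_{\sD}$ and behaves well with the unit; only multiplicativity of $\rho$, not mere complete positivity, yields invertibility. If $\rho$ takes values only in $C^*(P,U,\sK)$, we will absorb the discrepancy into $\sK(H^2)\otimes\sK(\sF)$, which is compact and harmless for the index.
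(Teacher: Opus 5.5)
Your proposal is correct and is essentially the paper's proof. Both use the $*$-homomorphic cross section $\rho$ from Theorem \ref{T:cp_section} to lift the invertible symbol $I+(1\otimes\pi')\iota(D)$ to the invertible operator $I+(1\otimes\rho\pi')\iota(D)$, and then show that this differs from $\iota(I+D)$ by a compact. The paper gets compactness directly from $\iota(D)\in\sK\otimes C^*(P,U)$ and $\mathrm{id}-\rho\pi'$ taking values in $\sK(\sF)$, whereas you get it from Theorem \ref{T:quotient}. You never need $I+D'$ to lie in $C^*(V_1,V_2)$, so the worry about $D'\in\sI_{\sD}$ is moot: write $\tau_{\rm BCL}^*(\cdot)\tau_{\rm BCL}$ or simply work on $H^2\otimes\sF$ instead of $\iota^{-1}$. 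The homotopy discussion in your first step is also unnecessary.
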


\begin{proof}By Theorem \ref{T:cp_section}, if $\kappa(V_1,V_2)=0$ then the quotient map $\pi'$ admits a $*$-isomorphic cross section, say $\rho$. By Theorem \ref{T:CommIdeal}, we also have $\iota(\sI_{\sD})=\sK\otimes C^*(P,U)$. Suppose that $I+D$ is Fredholm. Then $I+(1\otimes\pi')\iota(D)$ is invertible, and hence so is $I+(1\otimes\rho\pi')\iota(D)$. Since $\iota(D)\in\sK\otimes C^*(P,U)$, the difference $(1\otimes\rho\pi')\iota(D)-\iota(D)$ is compact. Therefore $\iota(I+D)$ is a compact perturbation of an invertible operator, and hence is Fredholm of index $0$. Since $\iota$ is unitarily implemented, the same holds for $I+D$.
\end{proof}

\begin{lemma}\label{L:ideal_equality}
Let $(V_1,V_2)$ and $(V_1',V_2')$ be $q$-commuting isometries on $\sH$ such that $C^*(V_1,V_2)=C^*(V_1',V_2')$. Then $\sI_{\sD}=\sI_{\sD'}$.
\end{lemma}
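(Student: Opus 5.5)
The plan is to characterize $\sI_{\sD}$ intrinsically in terms of the $C^*$-algebra $\sA\coloneqq C^*(V_1,V_2)=C^*(V_1',V_2')$, namely as the smallest closed two-sided ideal $J$ of $\sA$ modulo which the generating isometries become unitary. Since $[V_j^*,V_j]=I-V_jV_j^*$, we have
$$
\sI_{\sD}=\bigl\langle I-V_1V_1^*,\ I-V_2V_2^*\bigr\rangle .
$$
So an ideal $J\subseteq\sA$ contains $\sI_{\sD}$ if and only if $V_1+J$ and $V_2+J$ are unitaries in $\sA/J$. The same holds for $\sI_{\sD'}$ with $V_1',V_2'$. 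By symmetry it therefore suffices to show that $V_1'+\sI_{\sD}$ and $V_2'+\sI_{\sD}$ are unitary in $\sA/\sI_{\sD}$; this gives $\sI_{\sD'}\subseteq\sI_{\sD}$, and the reverse inclusion follows by interchanging the roles of the two pairs.

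To identify the quotient, I would use Lemma~\ref{L:Defect kernel} together with the model of Theorem~\ref{T:q-BCL} (in the standing pure setting). The map $(\pi\otimes1)\iota$ induces a $*$-isomorphism
$$
\sA/\sI_{\sD}\cong C^*(\widetilde V_1,\widetilde V_2).
$$
Here $(\widetilde V_1,\widetilde V_2)$ is a pair of $q$-commuting unitaries. Since $\theta$ is irrational and $\sA_\theta$ is simple, $C^*(\widetilde V_1,\widetilde V_2)\cong\sA_\theta$. The key step is then the following standard fact.

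\emph{Every isometry in $\sA_\theta$ is unitary.} Let $\tau$ be the faithful tracial state on $\sA_\theta$ and let $v\in\sA_\theta$ satisfy $v^*v=1$. Then $1-vv^*\geq 0$, and
$$
\tau(1-vv^*)=\tau(v^*v-vv^*)=0 .
$$
Faithfulness of $\tau$ forces $vv^*=1$.

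Now $V_j'$ is an isometry lying in $\sA$, so its image $V_j'+\sI_{\sD}$ is an isometry in $\sA/\sI_{\sD}\cong\sA_\theta$, hence unitary. Consequently $I-V_j'V_j'^{*}\in\sI_{\sD}$ for $j=1,2$, so $\sI_{\sD'}\subseteq\sI_{\sD}$, and symmetry yields equality.

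The main obstacle is the identification $\sA/\sI_{\sD}\cong\sA_\theta$. This requires the model (purity) and irrationality of $\theta$, which are used through Lemma~\ref{L:Defect kernel} and the simplicity of $\sA_\theta$; I would make these standing hypotheses explicit. Apart from this, the argument is purely algebraic. It uses only the finiteness of $\sA_\theta$, which comes from its faithful trace, and replaces any Gelfand-theoretic argument of the kind used in the commuting case.
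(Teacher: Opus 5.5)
Your proposal is correct and follows essentially the same route as the paper. Both reduce to showing $I-V_j'V_j'^*\in\sI_{\sD}$, identify $C^*(V_1,V_2)/\sI_{\sD}$ with $\sA_\theta$, and use the faithful trace to see that the isometries $V_j'+\sI_{\sD}$ are unitary. The only difference is how the quotient is identified: the paper notes directly that the images of $V_1,V_2$ are $q$-commuting unitaries generating it, and applies simplicity of $\sA_\theta$ rather than Lemma~\ref{L:Defect kernel}, so the purity hypothesis you flag is not needed.
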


\begin{proof}
By symmetry it suffices to show that $\sI_{\sD'}\subseteq \sI_{\sD}$. It is enough to prove that $I-V_i'V_i'^*\in \sI_{\sD}$ for $i=1,2$.
Let $\pi\colon C^*(V_1,V_2)\to C^*(V_1,V_2)/\sI_{\sD}$ be the quotient map and set
$B\coloneqq C^*(\pi(V_1),\pi(V_2))$. Since $V_i'\in C^*(V_1,V_2)$ is an isometry, $\pi(V_i')$ is an isometry in $B$. Once we show that $\pi(V_i')$ is unitary in $B$, we obtain $\pi(I-V_i'V_i'^*)=0$, and hence $I-V_i'V_i'^*\in\ker\pi=\sI_{\sD}$.

Recall the discussion preceding Lemma \ref{L:SEirrational}. Since $(\pi(V_1),\pi(V_2))$ is a $q$-commuting unitary pair, the algebra $B$ is canonically isomorphic to the irrational rotation algebra $\sA_{\theta}$. Moreover, since $\sA_\theta$ admits a (unique) faithful trace, every isometry in $\sA_{\theta}$ is unitary. Indeed, if $v\in\sA_{\theta}$ is an isometry and $\tau$ denotes the faithful trace, then  $$\tau(1-vv^*)=\tau(1)-\tau(vv^*)=\tau(1)-\tau(v^*v)=0.$$ By faithfulness of $\tau$, we conclude $vv^*=1$.
\end{proof}

We now have all the necessary tools to prove the main theorem of the paper.
\begin{theorem}\label{T:kappa_Cstar_invariant}
Let $(V_1,V_2)$ be a pair of $q$-commuting isometries such that $V_2^*V_1-qV_1V_2^*$ is compact. Then $|\kappa(V_1,V_2)|$ is a unitary invariant for $C^*(V_1,V_2)$.
\end{theorem}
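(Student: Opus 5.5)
We must show: if $C^*(V_1,V_2)$ and $C^*(V_1',V_2')$ are unitarily equivalent via a unitary $\tau$, with both pairs essentially doubly $q$-commuting, then $|\kappa(V_1,V_2)|=|\kappa(V_1',V_2')|$. The plan is to conjugate everything to one Hilbert space. Replacing $(V_1',V_2')$ by $(\tau^*V_1'\tau,\tau^*V_2'\tau)$ changes neither $\kappa$ (by \Cref{T:kappa_pair_invariant}) nor essential double $q$-commutation, so we may assume both pairs act on $\sH$ with $C^*(V_1,V_2)=C^*(V_1',V_2')$. By \Cref{L:ideal_equality}, $\sI_{\sD}=\sI_{\sD'}$. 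We first reduce to pure pairs with infinite-dimensional coefficient space; if needed, handle the finite-dimensional case separately or by the conventions of \Cref{S:FredholmTheory}, where $\kappa$ is automatically finite.

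The key observation is that the class of operators $\{I+D: D\in\sI_{\sD},\ I+D \text{ semi-Fredholm}\}$ and their indices depend only on the ideal $\sI_{\sD}$, not on the generating pair. We compare $\kappa$ through the distinguished elements of \Cref{T:I+D}.

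\textbf{Case 1: $\kappa(V_1,V_2)=+\infty$.} Then $I+D(V_1,V_2)$ is semi-Fredholm but not Fredholm by \Cref{T:I+D}, and $D(V_1,V_2)\in\sI_{\sD'}$. If $\kappa(V_1',V_2')$ were finite, then $C^*(P',U',\sK)/\sK$ would be commutative by \Cref{L:FredQuotient}, and \Cref{T:semifredimplesfred}, applied to the primed pair, would force $I+D(V_1,V_2)$ to be Fredholm, a contradiction. Hence both values are $+\infty$.

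\textbf{Case 2: $\kappa(V_1,V_2)=0$.} If $\kappa'\coloneqq\kappa(V_1',V_2')$ is finite and nonzero, then $I+D(V_1',V_2')\in I+\sI_{\sD}$ is Fredholm with index $-\kappa'\neq0$ by \Cref{T:I+D}, contradicting \Cref{T:ind(I+D)=0} applied to $(V_1,V_2)$. Case 1 already excludes $\kappa'=\infty$.

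\textbf{Case 3: $\kappa=\kappa(V_1,V_2)$ and $\kappa'$ both finite and nonzero.} \Cref{indformula} applied to each pair shows that the set of indices of Fredholm operators $I+D$ with $D$ in the common ideal equals $\kappa\Z$ and also equals $\kappa'\Z$. This uses that $B_1$ realizes $-\kappa$, so the set is exactly $\kappa\Z$, and similarly for the primed pair. Hence $\kappa\Z=\kappa'\Z$, so $|\kappa|=|\kappa'|$.

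The main obstacle I anticipate is justifying that the sets of indices coincide. The homotopy description in \Cref{indformula} is phrased through $\iota$ and the triple $(\sF,P,U)$ of a specific pair, but the set $\{\ind(I+D)\}$ is intrinsic to $\sI_{\sD}$, so it is independent of the pair. I also need to check that $B_1$ actually lies in $\iota(\sI_{\sD})$ shifted by $I$; this holds by \Cref{T:CommIdeal}. The purity and infinite-dimensionality reductions also need care.
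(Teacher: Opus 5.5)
Your proposal is correct and follows essentially the same route as the paper: the same conjugation to a single Hilbert space with a common defect ideal, and the same use of the distinguished elements $I+D(\cdot,\cdot)$ together with \Cref{T:semifredimplesfred}, \Cref{T:ind(I+D)=0} and \Cref{indformula}; only the order of the cases is rearranged. Your index-set formulation in Case 3 is the paper's mutual-divisibility argument in different words. It only needs that every Fredholm index of $I+D$ with $D$ in the common ideal lies in $\kappa\Z$, and that $-\kappa$ is attained, e.g.\ by $I+D(V_1,V_2)$. The ``obstacle'' you flag is therefore not one, since that set is defined purely in terms of the common ideal.
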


\begin{proof}

Let $\tau\colon \sH\to \sH'$ be a unitary such that $\tau C^*(V_1,V_2)\tau^*=C^*(V_1',V_2')$ and $(V_1'',V_2'')\coloneqq \tau^*(V_1',V_2')\tau$.
Then $(V_1'',V_2'')$ is essentially doubly $q$-commuting and $C^*(V_1,V_2)=C^*(V_1'',V_2'').$
Since $(V_1'',V_2'')$ is unitarily equivalent to $(V_1',V_2')$, Theorem~\ref{T:kappa_pair_invariant} gives $\kappa(V_1',V_2')=\kappa(V_1'',V_2'')$. Hence it suffices to prove that $|\kappa(V_1,V_2)|=|\kappa(V_1'',V_2'')|$.
We consider three cases, according to the value of $\kappa(V_1,V_2)$.

\textbf{Case I:} Assume $\kappa(V_1,V_2)=0$. By \Cref{T:I+D} applied to $(V_1'',V_2'')$, the operator $I+D(V_1'',V_2'')$ is semi-Fredholm of index $-\kappa(V_1'',V_2'')$. Since $\sI_{\sD}=\sI_{\sD''}$, we may view  $I+D(V_1'',V_2'')$ as an element of $ C^*(V_1,V_2)$ of the form $I+D$ with $D\in \sI_{\sD}$. Since $(V_1,V_2)$ is essentially doubly $q$-commuting, by Lemma \ref{L:FredQuotient} and the discussion preceding it,  $\kappa(V_1,V_2)=0$ implies $C^*(P,U,\sK)/\sK$ is commutative. By \Cref{T:semifredimplesfred}, $I+D(V_1'',V_2'')=I+D$ is in fact Fredholm. Then by \Cref{T:ind(I+D)=0}, 
$$
0=\ind(I+D)=\ind (I+D(V_1'',V_2''))=\kappa (V_1'',V_2'')
$$ and hence $\kappa(V_1'',V_2'')=0$ as was required to show.

Let us note in addition that the argument above can be reversed to show $\kappa(V_1'',V_2'')=0$ implies $\kappa(V_1,V_2)=0$. For this direction, one needs to work with the semi-Fredholm element $I+D(V_1,V_2)$ instead follow the same line of arguments as given above.

\textbf{Case II:} Assume $0<|\kappa(V_1,V_2)|<\infty$. As observed in Case~I, we may rule out the possibility $\kappa(V_1'',V_2'')=0$. Since $\sI_\sD=\sI_{\sD''}$, we can view $I+D(V_1'',V_2'')$ as an element of $C^*(V_1,V_2)$. By the assumption that $\kappa(V_1,V_2)<\infty$, the associated quotient $C^*(P,U,\sK)/\sK$ is commutative (by Lemma \ref{L:FredQuotient}). This makes Theorem \ref{T:semifredimplesfred} applicable to the pair $(V_1,V_2)$, which implies that $I+D(V_1'',V_2'')$, viewed as an element of $C^*(V_1,V_2)$, is Fredholm. By \Cref{T:I+D} applied to $(V_1'',V_2'')$, 
$$
\kappa(V_1'',V_2'') =  - \ind(I+D(V_1'',V_2'')) <\infty.
$$

Thus $\kappa(V_1'',V_2'')$ can neither be zero nor infinite. It remains to show that $|\kappa(V_1,V_2)|=|\kappa(V_1'',V_2'')|$. Since $k(V_1'',V_2'')$ is nonzero and finite, we apply Theorem \ref{indformula} for the pair $(V_1'',V_2'')$ and the element $D(V_1,V_2)\in \sI_\sD=\sI_{\sD''}$ to get an integer $N''$ such that
\begin{align}\label{Aux'}
N''\cdot \kappa(V_1'',V_2'') = \ind(I+D(V_1,V_2))=\kappa(V_1,V_2),
\end{align}where to obtain the last equality we applied Theorem \ref{T:I+D} to $(V_1,V_2)$. Since $k(V_1,V_2)$ is also nonzero and finite, Theorem \ref{indformula} can also be applied to $(V_1,V_2)$ for the element $D(V_1'',V_2'')\in \sI_{\sD''}=\sI_\sD$ to get an integer $N$ so that
\begin{align}\label{Aux''}
N\cdot \kappa(V_1,V_2) = \ind(I+D(V_1'',V_2''))=\kappa(V_1'',V_2''),
\end{align}where, as before, to obtain the last equality we applied Theorem \ref{T:I+D} to $(V_1'',V_2'')$. Equations \eqref{Aux'} and \eqref{Aux''} imply that $\kappa(V_1,V_2)$ and $\kappa(V_1'',V_2'')$ divides each other, hence $|\kappa(V_1,V_2)|=|\kappa(V_1'',V_2'')|$.

Finally, the set of arguments above can be reversed to obtain the reverse implication: 
$$
0<|\kappa(V_1'',V_2'')|<\infty\:\Longrightarrow 0<|\kappa(V_1,V_2)|<\infty.
$$

\textbf{Case III:} Assume $\kappa(V_1,V_2)=\infty$. The possibilities $\kappa(V_1'',V_2'')=0$ and $0<|\kappa(V_1'',V_2'')|<\infty$ are ruled out by the previous arguments in Cases I and II. Hence $\kappa(V_1'',V_2'')=\infty$ as well.
\end{proof}

\section*{Acknowledgments}
Part of this work was completed when the first author visited the Indian Statistical Institute, Bangalore. He gratefully acknowledges the support and hospitality extended by the institution.

\bibliographystyle{plain}
\bibliography{ref}

@article {MR3022732,
    AUTHOR = {Weber, M.},
     TITLE = {On {$C^\ast$}-algebras generated by isometries with twisted
              commutation relations},
   JOURNAL = {J. Funct. Anal.},
  FJOURNAL = {Journal of Functional Analysis},
    VOLUME = {264},
      YEAR = {2013},
    NUMBER = {8},
     PAGES = {1975--2004},
      ISSN = {0022-1236,1096-0783},
   MRCLASS = {46L05 (46L54 46L80)},
  MRNUMBER = {3022732},
MRREVIEWER = {William\ Paschke},
       DOI = {10.1016/j.jfa.2013.02.001},
       URL = {https://doi.org/10.1016/j.jfa.2013.02.001},
}

@article {MR2176158,
    AUTHOR = {Park, E.},
     TITLE = {Toeplitz algebras and extensions of irrational rotation
              algebras},
   JOURNAL = {Canad. Math. Bull.},
  FJOURNAL = {Canadian Mathematical Bulletin. Bulletin Canadien de
              Math\'ematiques},
    VOLUME = {48},
      YEAR = {2005},
    NUMBER = {4},
     PAGES = {607--613},
      ISSN = {0008-4395,1496-4287},
   MRCLASS = {46L80 (47A53 47B35)},
  MRNUMBER = {2176158},
MRREVIEWER = {Sriwulan\ Adji},
       DOI = {10.4153/CMB-2005-056-2},
       URL = {https://doi.org/10.4153/CMB-2005-056-2},
}

@article {MR2320853,
    AUTHOR = {Jury, M. T.},
     TITLE = {The {F}redholm index for elements of {T}oeplitz-composition
              {$C^*$}-algebras},
   JOURNAL = {Integral Equations Operator Theory},
  FJOURNAL = {Integral Equations and Operator Theory},
    VOLUME = {58},
      YEAR = {2007},
    NUMBER = {3},
     PAGES = {341--362},
      ISSN = {0378-620X,1420-8989},
   MRCLASS = {47B33 (46L55 47A53 47B35 47L80)},
  MRNUMBER = {2320853},
MRREVIEWER = {A.\ B\"ottcher},
       DOI = {10.1007/s00020-007-1494-0},
       URL = {https://doi.org/10.1007/s00020-007-1494-0},
}

@article {MR467392,
    AUTHOR = {Berger, C. A. and Coburn, L. A. and Lebow, A.},
     TITLE = {Representation and index theory for {$C\sp*$}-algebras
              generated by commuting isometries},
   JOURNAL = {J. Functional Analysis},
  FJOURNAL = {Journal of Functional Analysis},
    VOLUME = {27},
      YEAR = {1978},
    NUMBER = {1},
     PAGES = {51--99},
      ISSN = {0022-1236},
   MRCLASS = {47C10 (46L10 46M20)},
  MRNUMBER = {467392},
MRREVIEWER = {Horst\ Behncke},
       DOI = {10.1016/0022-1236(78)90019-8},
       URL = {https://doi.org/10.1016/0022-1236(78)90019-8},
}

@incollection {MR5075249,
    AUTHOR = {Ball, J. A. and Sau,H.},
     TITLE = {Models for {$q$}-commuting and doubly {$q$}-commuting pairs of
              isometries},
 BOOKTITLE = {Schur analysis and applications to hypercomplex analysis,
              neural networks, and linear systems},
    SERIES = {Oper. Theory Adv. Appl.},
    VOLUME = {308},
     PAGES = {27--72},
 PUBLISHER = {Birkh\"auser/Springer, Cham},
      YEAR = {[2026] \copyright 2026},
      ISBN = {978-3-032-02314-8; 978-3-032-02315-5},
   MRCLASS = {47A65 (47A13 47A45)},
  MRNUMBER = {5075249},
       DOI = {10.1007/978-3-032-02315-5\_2},
       URL = {https://doi.org/10.1007/978-3-032-02315-5_2},
}

@book {MR1634900,
    AUTHOR = {Douglas, R. G.},
     TITLE = {Banach algebra techniques in operator theory},
    SERIES = {Graduate Texts in Mathematics},
    VOLUME = {179},
   EDITION = {Second},
 PUBLISHER = {Springer-Verlag, New York},
      YEAR = {1998},
     PAGES = {xvi+194},
      ISBN = {0-387-98377-5},
   MRCLASS = {47-01 (46-01)},
  MRNUMBER = {1634900},
       DOI = {10.1007/978-1-4612-1656-8},
       URL = {https://doi.org/10.1007/978-1-4612-1656-8},
}

@article {MR2132080,
    AUTHOR = {J{\o}rgensen, P. E. T. and Proskurin, D. P. and Samo{\u\i}lenko, Y. S.},
     TITLE = {On {$C^*$}-algebras generated by pairs of {$q$}-commuting
              isometries},
   JOURNAL = {J. Phys. A},
  FJOURNAL = {Journal of Physics. A. Mathematical and General},
    VOLUME = {38},
      YEAR = {2005},
    NUMBER = {12},
     PAGES = {2669--2680},
      ISSN = {0305-4470,1751-8121},
   MRCLASS = {46L05 (81R15 81S05)},
  MRNUMBER = {2132080},
MRREVIEWER = {Lyudmila\ Turowska},
       DOI = {10.1088/0305-4470/38/12/009},
       URL = {https://doi.org/10.1088/0305-4470/38/12/009},
}

@article {MR2091468,
    AUTHOR = {Ga\c{s}par, D. and Ga\c{s}par, P.},
     TITLE = {Wold decompositions and the unitary model for bi-isometries},
   JOURNAL = {Integral Equations Operator Theory},
  FJOURNAL = {Integral Equations and Operator Theory},
    VOLUME = {49},
      YEAR = {2004},
    NUMBER = {4},
     PAGES = {419--433},
      ISSN = {0378-620X,1420-8989},
   MRCLASS = {47A45 (42B10 47A13)},
  MRNUMBER = {2091468},
MRREVIEWER = {Alexey\ S.\ Tikhonov},
       DOI = {10.1007/s00020-002-1216-6},
       URL = {https://doi.org/10.1007/s00020-002-1216-6},
}

@article {MR229093,
    AUTHOR = {Suciu, I.},
     TITLE = {On the semi-groups of isometries},
   JOURNAL = {Studia Math.},
  FJOURNAL = {Polska Akademia Nauk. Instytut Matematyczny. Studia
              Mathematica},
    VOLUME = {30},
      YEAR = {1968},
     PAGES = {101--110},
      ISSN = {0039-3223,1730-6337},
   MRCLASS = {47.50},
  MRNUMBER = {229093},
MRREVIEWER = {Ronald\ G.\ Douglas},
       DOI = {10.4064/sm-30-1-101-110},
       URL = {https://doi.org/10.4064/sm-30-1-101-110},
}

@article {MR2289758,
    AUTHOR = {Bercovici, H. and Douglas, R. G. and Foias, C.},
     TITLE = {On the classification of multi-isometries},
   JOURNAL = {Acta Sci. Math. (Szeged)},
  FJOURNAL = {Acta Universitatis Szegediensis. Acta Scientiarum
              Mathematicarum},
    VOLUME = {72},
      YEAR = {2006},
    NUMBER = {3-4},
     PAGES = {639--661},
      ISSN = {0001-6969,2064-8316},
   MRCLASS = {47A13 (47A45)},
  MRNUMBER = {2289758},
MRREVIEWER = {Gelu\ Fanica\ Popescu},
}

@incollection {MR4978050,
    AUTHOR = {Ball, J. A. and Sau, H.},
     TITLE = {Commuting and doubly commuting pairs of isometries},
 BOOKTITLE = {Operator theory, related fields, and applications},
    SERIES = {Oper. Theory Adv. Appl.},
    VOLUME = {307},
     PAGES = {15--67},
 PUBLISHER = {Birkh\"auser/Springer, Cham},
      YEAR = {[2025] \copyright 2025},
      ISBN = {978-3-032-00154-2; 978-3-032-00155-9},
   MRCLASS = {47-02 (47A15 47A45)},
  MRNUMBER = {4978050},
       DOI = {10.1007/978-3-032-00155-9\_2},
       URL = {https://doi.org/10.1007/978-3-032-00155-9_2},
}

@book {MR5090861,
    AUTHOR = {Ball, J. A. and Sau, H.},
     TITLE = {Dilation and model theory for pairs of commuting contraction
              operators},
    SERIES = {Cambridge Tracts in Mathematics},
    VOLUME = {236},
 PUBLISHER = {Cambridge University Press, Cambridge},
      YEAR = {2026},
     PAGES = {xii+302},
      ISBN = {978-1-009-68721-8},
   MRCLASS = {47-02 (47A13 47A20 47A45 47A48 47A56)},
  MRNUMBER = {5090861},
}

@article {MR69403,
    AUTHOR = {Stinespring, W. F.},
     TITLE = {Positive functions on {$C^*$}-algebras},
   JOURNAL = {Proc. Amer. Math. Soc.},
  FJOURNAL = {Proceedings of the American Mathematical Society},
    VOLUME = {6},
      YEAR = {1955},
     PAGES = {211--216},
      ISSN = {0002-9939,1088-6826},
   MRCLASS = {46.0X},
  MRNUMBER = {69403},
MRREVIEWER = {J.\ Dixmier},
       DOI = {10.2307/2032342},
       URL = {https://doi.org/10.2307/2032342},
}

@article {MR355560,
    AUTHOR = {Andersen, T. B.},
     TITLE = {Linear extensions, projections, and split faces},
   JOURNAL = {J. Functional Analysis},
  FJOURNAL = {Journal of Functional Analysis},
    VOLUME = {17},
      YEAR = {1974},
     PAGES = {161--173},
      ISSN = {0022-1236},
   MRCLASS = {46E15 (46A99 46L05)},
  MRNUMBER = {355560},
MRREVIEWER = {T.\ Ando},
       DOI = {10.1016/0022-1236(74)90010-x},
       URL = {https://doi.org/10.1016/0022-1236(74)90010-x},
}

@incollection {MR2931928,
    AUTHOR = {Bercovici, H. and Douglas, R.  G. and Foias, C.},
     TITLE = {Canonical models for bi-isometries},
 BOOKTITLE = {A panorama of modern operator theory and related topics},
    SERIES = {Oper. Theory Adv. Appl.},
    VOLUME = {218},
     PAGES = {177--205},
 PUBLISHER = {Birkh\"auser/Springer Basel AG, Basel},
      YEAR = {2012},
      ISBN = {978-3-0348-0220-8},
   MRCLASS = {47A45 (47A15 47B37)},
  MRNUMBER = {2931928},
MRREVIEWER = {Edward\ Azoff},
       DOI = {10.1007/978-3-0348-0221-5\_7},
       URL = {https://doi.org/10.1007/978-3-0348-0221-5_7},
}

@book {MR1402012,
    AUTHOR = {Davidson, K. R.},
     TITLE = {{$C^*$}-algebras by example},
    SERIES = {Fields Institute Monographs},
    VOLUME = {6},
 PUBLISHER = {American Mathematical Society, Providence, RI},
      YEAR = {1996},
     PAGES = {xiv+309},
      ISBN = {0-8218-0599-1},
   MRCLASS = {46Lxx (46-01)},
  MRNUMBER = {1402012},
MRREVIEWER = {Robert\ S.\ Doran},
       DOI = {10.1090/fim/006},
       URL = {https://doi.org/10.1090/fim/006},
}
\end{document}